\theoremstyle{plain}
\newtheorem{thm}{Theorem}[section]
\newtheorem{lem}[thm]{Lemma}
\newtheorem{cor}[thm]{Corollary}
\newtheorem{prop}[thm]{Proposition}
\theoremstyle{definition}
\newtheorem{ntt}[thm]{}
\newtheorem{ex}[thm]{Example}
\newtheorem{rem}[thm]{Remark}
\newtheorem{dfn}[thm]{Definition}
\newcommand{\Gm}{\mathbb{G}_{m}}  
\newcommand{\zz}{\mathbb{Z}}       
\newcommand{\N}{\mathbb{N}}        
\newcommand{\A}{\mathbb{A}}        
\newcommand{\cc}{\mathfrak{c}}     
\newcommand{\too}{\,\longrightarrow\,}  
\newcommand{\LU}{\mathcal{U}}     
\DeclareMathOperator{\Cob}{\Omega}   
\newcommand{\smb}{{\scriptscriptstyle \bullet}} 
\DeclareMathOperator{\Spec}{\mathrm{Spec}}  
\DeclareMathOperator{\Rep}{\mathrm{Rep}}    
\DeclareMathOperator{\cone}{\mathrm{cone}}
\DeclareMathOperator{\cfb}{\mathfrak{cf}}
\newcommand{\sheaf}[1]{{\mathcal #1}}
\DeclareMathOperator{\Nis}{\mathrm{Nis}}
\DeclareMathOperator{\pt}{\mathrm{pt}}   
\DeclareMathOperator{\GL}{\mathrm{GL}}              
\DeclareMathOperator{\End}{\mathrm{End}}            
\DeclareMathOperator{\Hom}{\mathrm{Hom}}            
\DeclareMathOperator{\Mor}{\mathrm{Mor}}            
\DeclareMathOperator{\codim}{\mathrm{codim}}        
\DeclareMathOperator{\CH}{\mathrm{CH}}              
\DeclareMathOperator{\ch}{\mathrm{c}}               
\DeclareMathOperator{\QK}{\mathrm{K}}               
\newcommand{\MK}[1]{\mathrm{K}^{M}_{#1}}            
\DeclareMathOperator{\CM}{\mathrm{M}}               
\DeclareMathOperator{\CK}{\mathrm{C}}               
\newcommand{\one}{\mathbb{I}}
\DeclareMathOperator{\HM}{\mathrm{H}}               
\DeclareMathOperator{\Image}{\mathrm{Im}}           
\DeclareMathOperator{\res}{\mathrm{res}}            
\DeclareMathOperator{\id}{\mathrm{id}}              
\newcommand{\ep}{\epsilon}
\newcommand{\hh}{\mathtt{h}}   
\DeclareMathOperator{\Pcat}{{\mathcal P}}                     
\DeclareMathOperator{\GSm}{\mathit{G}\text{-}\mathrm{Sm}}  
\DeclareMathOperator{\Ess}{\mathit{G}\text{-}\mathrm{Ess}}  
\DeclareMathOperator{\Ab}{\mathfrak{Ab}}                      
\newcommand{\Db}{\mathrm{D}^{b}}                     
\newcommand{\Kb}{\mathrm{K}^{b}}                     
\newcommand{\ul}[1]{\underline{#1}}
\newcommand{\ie}{\textsl{i.e.}\ }
\newcommand{\cf}{\textsl{cf.}\ }
\newcommand{\eg}{\textsl{e.g.}\ }
\newcommand{\locit}{\textsl{loc.cit.}\ }
\begin{document}

\title{Equivariant pretheories and invariants of torsors}

\author{Stefan Gille}
\author{Kirill Zainoulline}

\address{Stefan Gille, Mathematisches Institut, Universit\"at M\"unchen, 
Theresienstrasse 39,
         80333 M\"unchen, Germany}
\email{gille@mathematik.uni-muenchen.de}

\address{Kirill Zainoulline, Department of Mathematics and Statistics, 
University of Ottawa,
         585 King Edward, Ottawa ON K1N 6N5, Canada}
\email{kirill@uottawa.ca}

\subjclass[2000]{Primary 20G15; Secondary 19L47, 14F43}
\keywords{Torsor, Equivariant cohomology}


\begin{abstract}
Using the notion of an equivariant pretheory 
we generalize a theorem of Karpenko-Merkurjev
on $G$-torsors and rational cycles; to every
$G$-torsor and equivariant pretheory we associate
a commutative ring which in the case of Chow groups encodes
the information concerning the $J$-invariant
and in the case of Grothendieck's $K_{0}$--indexes of the Tits algebras.
\end{abstract}


\maketitle

\section{Introduction}

In the present paper we introduce and study the notion of a (graded) equivariant
pretheory.
Roughly speaking, it is defined to be a contravariant functor from
the category of $G$-varieties, where $G$ is an algebraic group, to (graded) abelian groups which
satisfies localization and homotopy invariance properties.
All known examples of equivariant oriented cohomology theories (equivariant Chow groups, $K$-theory, algebraic cobordism, etc.) are pretheories in our sense.

\smallbreak

We generalize the equivariant Chow groups of Edidin-Graham by introducing equivariant
(co)homology theory with coefficients in a Rost cycle module. We also prove a version of Merkurjev's equivariant $K$-theory
spectral sequence for equivariant cycle homology. This provides many new
examples of equivariant pretheories. 

\smallbreak

One of the key results of Karpenko-Merkurjev \cite[Thm.~6.4]{KaMe06} tells
us that the characteristic subring  of the Chow ring of a variety of Borel
subgroups of a split linear algebraic group $G$ 
is contained in the image of the restriction map, i.e. always consists
of rational cycles. 
This fact plays a fundamental role in
computations of canonical/essential dimensions, discrete motivic invariants of~$G$
and in the study of splitting properties of $G$-torsors.

\smallbreak

In the present paper we generalize this result to an arbitrary equivariant pretheory (see Theorem~\ref{mainThm}).
In particular, we obtain versions of \cite[Thm.~6.4]{KaMe06}
for Grothendieck's $K_0$ and algebraic cobordism $\Omega$ of Levine-Morel.

\smallbreak

As an application we define for any equivariant pretheory~$\hh$ and $G$-torsor~$E$
a commutative ring~$\hat \hh_B(E)$
(see Def.~\ref{geninv}). If $E$ is generic and
$\hh$ is either the Chow ring $\CH^*$ or Grothendieck's $K_{0}$ or algebraic cobordism $\Omega$,
this ring coincides with the cohomology ring $\hh(G)$ of~$G$.
In general, it is always a quotient of $\hh(G)$ 
which in the case of the Chow ring
is related to the motivic $J$-invariant of $E$ and in the case of
$K_{0}$ -- to the indexes of the Tits algebras of $E$. This provides 
a fascinating link between these two discrete invariants, totally unrelated at the first sight:
one observes that the $p$-exceptional
degrees of V.~Kac for Chow groups \cite{Kc85} play the same role as the maximal
Tits indexes for $K_{0}$ \cite{MePaWa96}.

\smallbreak

The paper is organized as follows: In the first two sections we introduce the notion of
an equivariant pretheory and provide several examples including equivariant cycle
(co)homology.
In Section~4 we generalize the result of Karpenko-Merkurjev to an arbitrary equivariant pretheory.  
In the last section we provide applications to equivariant oriented cohomology theories
(Chow groups, Grothendieck's $K_0$ and algebraic cobordism of Levine-Morel). Appendix is devoted
to the construction of a spectral sequence for cycle homology 
which generalizes the long exact localization sequence.

\medbreak

\begin{ntt}[\bf Notations]\label{notation}
Unless otherwise indicated, all schemes/varieties are defined over the base field $k$.
By a scheme over a field $k$ ($k$-scheme) we mean a reduced separated Noetherian scheme over $k$.
By a variety over a field $k$ ($k$-variety) we mean a quasi-projective scheme over $k$
(note that it has to be of finite type over $k$).
If $l/k$ is a field extension and~$X$ is a $k$-scheme, we define
$X_{l}=X\times_{\Spec k} {\Spec l}$ to be the respective base change. By $\pt$ we denote $\Spec k$.

\smallbreak

By an algebraic group we mean an affine smooth group scheme over $k$.
By a subgroup we always 
understand a closed algebraic subgroup. 
By an action of an algebraic group $G$ on a scheme $X$ 
we mean a morphism $G\times_{\Spec k} X\to X$ of schemes over $k$
(all group actions are assumed to be on the left), subject
to the usual axioms, see~\cite[Def.\ 0.3]{GIT}.
By a $G$-scheme we mean a scheme $X$ endowed with
an action of an algebraic group $G$. 

\smallbreak
We denote by $\GSm_k$ the category of smooth $G$-varieties over $k$ with equivariant $G$-morphisms.
A localization of a smooth variety over $k$ is called essentially smooth.
We denote by $\Ess_k$ the category of 
essentially smooth $G$-schemes over $k$ with 
$G$-equivariant flat morphisms.
We denote by $\Ab$ the category of abelian groups.
\end{ntt}


\section{Equivariant pretheories.}
\label{Torsor-PreThSect}

In the present section we introduce the notion of 
a (graded) equivariant pretheory and
provide several examples.

\smallbreak
Let $G$ be an algebraic group over a field $k$.
Consider a contravariant functor 
from the category of smooth $G$-varieties over $k$ to
the category of abelian groups
$$
\hh_G\colon \GSm_k \too \Ab,\quad X\mapsto \hh_G(X).
$$ 
Given $X$, $Y\in \Ess_k$ and 
a $G$-equivariant map $f\colon X\to Y$
the induced functorial map $\hh_{G}(Y)\to \hh_{G}(X)$ is
called a {\it pull-back} and is denoted by  $f^{*}_{G}$.

\begin{dfn}\label{dfnpre}
The functor 
$\hh_G\colon \GSm_k \to \Ab$ 
is called a {\it $G$-equivariant pretheory} over $k$ 
if it satisfies
the following two axioms:

\begin{itemize}
\item[H.]
(homotopy invariance) 
For a $G$-equivariant 
map $p\colon \A_k^{n} \to \pt$ 
(where $G$ acts trivially on $\pt$)
the induced pull-back 
$$
p^\ast_G\colon \hh_G(\pt)\too\hh_G(\A^n_k)
$$
is an isomorphism.

\smallbreak

\item[L.]
(localization) 
For a smooth $G$-variety $X$ and 
a $G$-equivariant open embedding
$\iota\colon U\hookrightarrow X$ 
the induced pull-back
$$
\iota^{\ast}_G\colon \hh_G(X)\too\hh_G(U)
$$
is surjective.
\end{itemize}
\end{dfn}

Let $\LU$ be a $G$-scheme over $k$ such that
$\LU$ is the localization of a smooth irreducible $G$-variety $X$ with respect to
$G$-equivariant open embeddings 
$f_{ij}\colon U_j\to U_i$, $U_i\subset X$, i.e.
$\LU=\varprojlim_{f_{ij}} U_i$. Observe that $\LU$ is essentially smooth over $k$.

\smallbreak

Let $\bar\hh_G(\LU)$ denote the induced colimit  $\varinjlim_{(f_{ij})^*_G} \hh_G(U_i)$.
Note that the canonical maps 
$\hh_G(U_i) \to \bar\hh_G(\LU)$ 
are surjective 
by the localization property (L).

\begin{dfn} We call $\hh_G$
an {\em essential} $G$-equivariant pretheory if $\hh_G$ 
can be extended to the category $\Ess_k$ of essentially smooth
$G$-schemes over $k$ with $G$-equivariant flat morphisms, i.e.  
$$
\hh_G\colon \Ess_k\too \Ab,
$$
such that the following additional axiom holds:

\begin{itemize}
\item[C.] Given $\LU$ as above, the map induced by flat pull-backs  $\hh_G(U_i)\to \hh_G(\LU)$ 
$$\bar\hh_G(\LU)\to \hh_G(\LU)$$ 
is surjectve.
\end{itemize}

Note that (C) holds if and only if 
the induced pull-back 
$\hh_G(U_i) \to \hh_G(\LU)$ 
is surjective for some $i$.
\end{dfn}

\begin{ex}[Equivariant $K$-theory]
We recall definitions and basic properties of equivariant $K$-groups
as defined by Thomason~\cite{Th87}, 
see also the survey article~\cite{Me05} of Merkurjev.

\smallbreak

Let~$G$ be an algebraic group over $k$ 
and~let $X$ be a smooth $G$-variety.
Then the category $\Pcat (G,X)$ of locally free
$G$-modules on $X$ 
(in the sense of Mumford~\cite[I, \S 3]{GIT})
is an exact category. 
Following Thomason~\cite{Th87} 
one defines the $i$-th $G$-equivariant $K$-group 
$\QK_{i}(G,X)$ as Quillen's $i$-th $K$-group 
of the exact category $\Pcat (G,X)$. 

\smallbreak

Let $\hh_{G}(X)=\QK_{0}(G,X)$. 
Then according to \cite[Thm.\ 2,7 and Lem.\ 4.1]{Me05} 
it satisfies localization and homotopy invariance, 
and by \cite[52.F]{AGQ} it satisfies (C). 
Hence, it provides an example of an essential $G$-equivariant pretheory. 
\end{ex}

\begin{ex}[Equivariant cobordism]
This theory has been recently defined by 
Heller and Malag\'on-L\'opez~\cite{HeMa-Lo10}.

\smallbreak

Assume that $char(k)=0$. 
Consider the ring  $\Cob_*(X)$ of algebraic cobordism
of a smooth $k$-variety~$X$ 
as defined by Levine and Morel~\cite{AC}.
Since $\Cob_{i}(X)$ does not vanish for~$i$ big enough 
(as Chow groups do) 
one can not copy word by word the definition 
of equivariant Chow groups 
given by Edidin and Graham~\cite{EdGr98},
see also Section~\ref{EqCycleHomSect}.

\smallbreak

Instead Heller and Malag\'on-L\'opez consider \cite{HeMa-Lo10} 
(what they call) {\it good systems of representations}. 
These are families of pairs
$(V_{i},U_{i})_{i\in\N}$ of vector spaces 
with $U_ {i}\subseteq V_{i}$ endowed with 
an action of an algebraic group~$G$
such that 
\begin{itemize}
\item[(i)] 
$G$ acts freely on~$U_{i}$ and $U_{i}\to U_{i}/G$ is a $G$-torsor, 
\item[(ii)]
$V_{i+1}=V_{i}\oplus W_{i}$ for some $k$-subspace $W_{i}$, 
such that $U_{i}\oplus W_{i}\subseteq U_{i+1}$,
\item[(iii)] 
$\sup\dim V_{i}=\infty$, and 
\item[(iv)] 
$\codim_{V_{i}}(V_{i}\setminus U_{i})<\codim_{V_{i+1}}(V_{i+1}\setminus U_{i+1})$, 
where we consider $V_{i}$ as an affine space over $k$. 
\end{itemize}
Observe that assumption~(i) ensures that
the quotient $X\times^G U:=(X\times_k U)/G$ 
is a quasi-projective variety over $k$
(see \cite[Prop.\ 23]{EdGr98}). 
Moreover, it is smooth over $k$ by the descent, 
since $X\times_k U \to X\times^G U$ is faithfully flat.

\smallbreak

Let $G$ be connected.  Then the $n$-th
{\it equivariant cobordism group} of a 
smooth $G$-variety~$X$ is defined by
$$
\Cob_{n}^{G}(X)\, := 
\varprojlim_i \Cob_{n-\dim G+\dim U_{i}} (X\times^{G}U_{i})\, .
$$
This is well defined, see~\cite[Cor.\ 3.4]{HeMa-Lo10}, 
and the functor
$$
\hh_G\colon X\,\longmapsto\,
\bigoplus_{n\in\zz}\Cob_{n}^{G}(X)
$$
satisfies the localization and homotopy invariance axioms by
[\locit Thm.\ 4.2 and Cor.\ 4.6].
Hence, it provides an example of a $G$-equivariant pretheory.
\end{ex}

A further example is 
the equivariant Chow-theory of Edidin and Graham~\cite{EdGr98}. 
We consider this later (see Example~\ref{equivChow}) 
when we take a closer look at equivariant cycle (co)homology.

\smallbreak

There is also a graded version of a $G$-equivariant pretheory
\begin{dfn}
\label{defgrpre}
A pair of varieties $(X,U)$ is called a $G$-pair 
if $X\in\GSm_k$ and $U\subseteq X$
is a $G$-equivariant open subvariety.
Consider the category of $G$-pairs over $k$ 
with $G$-equivarant morphisms of pairs.

A contravariant functor
$$
(X,U)\,\longmapsto\,\hh_{G}^{\ast}(X,U)
$$
from the category of $G$-pairs to graded abelian groups 
is called a {\it graded $G$-equivariant pretheory} 
if it satisfies (H)~homotopy invariance, and for any $G$-pair $(X,U)$ 
there is a long exact localization sequence
$$
\xymatrix{
\ldots \ar[r] & \hh_G^i(X) \ar[r]^-{\iota^{\ast}_{G}} &
    \hh_G^i(U) \ar[r]^-{\partial} &  \hh_G^{i+1}(X,U)
           \ar[r] & \ldots\, ,
}
$$
where $\iota\colon U\hookrightarrow X$ is the corresponding
$G$-equivariant open embedding, 
and we have set
$\hh_{G}^{\ast}(Y):=\hh_{G}^{\ast}(Y,Y)$. It is called a {\em graded essential $G$-equivariant pretheory} 
if given an inverse limit $(\mathcal{X},\LU)=\varprojlim_i (X_i, U_i)$
of $G$-equivariant open embeddings of pairs, there is the induced surjection
$$
\varinjlim_i \hh_G^*(X_i,U_i) \too \hh_G^*(\mathcal{X},\LU).
$$
\end{dfn}


\section{Equivariant cycle (co)homology}
\label{EqCycleHomSect}

\noindent
In this section we generalize the equivariant Chow groups 
of Edidin and Graham~\cite{EdGr98}. 
This theory has been considered for the cycle module
Galois cohomology by Guillot~\cite{Gu07}.
We will use freely Rost's~\cite{Ro96} theory of cycle modules for
which we refer also to the book~\cite{AGQ}
of Elman, Karpenko and Merkurjev, as well as to the article
of D\'eglise~\cite{Deg06} where several important properties of the
generalized ``intersection'' product in cycle cohomology are
proven (defined in~\cite[Sect.\ 14]{Ro96}).

Since Rost's theory for algebraic spaces is not yet developed we
have to restrict ourseives to quasi-projective schemes, i.e. to varieties.
This assumption guarantees that certain quotients by groups
actions which we consider here do exist.

\begin{ntt}[{\bf Equivariant cycle homology}]
To fix notations we recall briefly the definition of cycle homology.
A {\it cycle module} over the field~$k$ is a (covariant) functor
$\CM_{\ast}$
from the category of field extensions of~$k$ to the category of graded
abelian groups subject to several axioms, see~\cite[Sects.\ 1,2]{Ro96}.
The prototype of such a functor is {\it Milnor $K$-theory} $\MK{\ast}$,
and by the very definition
$\CM_{\ast}(E)=\bigoplus\limits_{i\in\zz}\CM_{i}(E)$ is a graded
$\MK{\ast}(E)$-module for all field extensions $E\supseteq k$.

\smallbreak

Given a $k$-variety~$X$ (not necessarily smooth) and a cycle module~$\CM_{\ast}$ over~$k$
Rost~\cite{Ro96} has defined a complex, the so called {\it cycle complex}
(generalizing a construction of Kato~\cite{Ka86} for Milnor $K$-theory):
$$
\xymatrix{
\ldots\; \ar[r] & {\bigoplus\limits_{x\in X_{(2)}}}\CM_{n+2}(k(x)) \ar[r]^-{d_{2}} &
   {\bigoplus\limits_{x\in X_{(1)}}}\CM_{n+1}(k(x)) \ar[r]^-{d_{1}} &
      {\bigoplus\limits_{x\in X_{(0)}}}\CM_{n}(k(x)),
}
$$
where $X_{(i)}\subseteq X$ denotes the set of points of dimension~$i$ in~$X$.
We denote this complex $\CK_{\smb}(X,\CM_{n})$ and consider it as a homological
complex with the direct sum $\bigoplus\limits_{x\in X_{(i)}}\CM_{n+i}(k(x))$ in
degree~$i$.

\smallbreak

The $i$-th cycle homology group~$\HM_{i}(X,\CM_{n})$ of~$\CM_{n}$
over~$X$ is then defined as $\HM_{i}(\CK_{\smb}(X,\CM_{n}))$. Note that
there is a natural isomorphism $\HM_{i}(X,\MK{-i})\simeq\CH_{i}(X)$ for
all $i\geq 0$, where we have set $\MK{-i}\equiv 0$ for $i<0$.

\smallbreak

\label{EchSubSect}
To introduce the equivariant cycle homology we adapt
the definition of equivariant Chow groups due to Edidin
and Graham~\cite{EdGr98}, see also Guillot~\cite{Gu07} and
Totaro~\cite{To97}.

\smallbreak

Let $G$ be an algebraic group over $k$ of dimension~$s$ and $X$~a
$G$-variety. To define the $i$-th cycle homology group with coefficients
in the cycle module~$\CM_{\ast}$ we chose a linear representation~$V$
of~$G$, such that there is an open subscheme $U\hookrightarrow V$ with
$\codim_{V}(V\setminus U)\geq c=\dim X$ on which~$G$ acts freely. By shrinking~$U$
we can moreover assume that $U\too U/G$ is a principal bundle. The later
assumption assures that $X\times^{G}U:=(X\times_{k}U)/G$ exists
in the category of $k$-varieties, see~\cite[Prop.\ 23]{EdGr98}
(recall that we assume that~$X$ is quasi-projective, see \ref{notation}). We
call the pair $(U,V)$ an {\it $(X,G)$-admissible pair} for the
$G$-variety~$X$. Note that for a finite number of $G$-varieties there always
exist a pair~$(U,V)$ which is admissible for all of them.
\end{ntt}

\begin{dfn}
\label{EqCyclehomDef}
Let~$G$ be an algebraic group over $k$.
Then the $i$-th {\it $G$-equivariant cycle homology group}
with values in the cycle module~$\CM_{\ast}$ over $k$ is defined as
$$
\HM^{G}_{i}(X,\CM_{\ast})\, :=\;
      \HM_{i +l-s}(X\times^{G}U,\CM_{\ast -(l-s)})\, ,
$$
where $s=\dim G$ and $(U,V)$ is a $(X,G)$-admissible pair
with $\dim V =l$ and $\codim_{V}V\setminus U\geq\dim X$.
\end{dfn}

It remains to check that this definition does not depend on the choice of the
$(X,G)$-admissible pair $(U, V)$.
Since $\HM_{j}(Y,\CM_{\ast})=0$ for any
$k$-variety~$Y$ if $j>\dim Y$ this can be proven as
for equivariant Chow groups using (the so called) Bogomolov's
double filtration argument, see~\cite{EdGr98} or~\cite{To97}.
We recall briefly the details:

\smallbreak

Let $U_{1}\subset V_{1}$ be another
$(X,G)$-admissible pair with $l_{1}=\dim V_{1}$. Then there
exists an open subvariety~$W$ of $V_{1}\oplus V$, which contains
$U_{1}\oplus V$ and $V_{1}\oplus U$ as open subvarieties, and such
that~$G$ acts on~$W$ with principal bundle quotient~$W/G$. Then
the quotient $X\times^{G}W$ exists in the category of varieties.

\smallbreak

We use the following fact from~\cite[Prop.\ 2 and Lem.\ 1]{EdGr98},
which is not hard to verify if $U\too U/G$ is a trivial $G$-torsor
and follows in general by descent from the ``trivial'' case.

\begin{lem}
\label{P-Lem}
Let $G,U,V$ and~$X$ be as above and $f\colon X\to Y$ be a $G$-morphism.
Denote $f\times^{G}\id_{U}\colon X\times^{G}U\to Y\times^{G}U$ the
induced morphism.

Consider the following properties {\bf P} of~$f$: vector bundle,
flat, smooth, proper, regular immersion,
open or closed immersion. Then if~$f$ as property~{\bf P} implies that
also $f\times^{G}\id_{U}$ has property~{\bf P}. 
\end{lem}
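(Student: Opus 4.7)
The plan is to follow the hint in the paper and reduce the claim by faithfully flat descent to the trivial-torsor case. First, I would observe that both $X\times^G U$ and $Y\times^G U$ carry natural morphisms to $U/G$ coming from the projection to $U$, and that standard properties of associated bundles yield canonical isomorphisms $(X\times^G U)\times_{U/G}U\cong X\times_k U$ and similarly for $Y$. Under these identifications, the square
$$
\xymatrix{
X\times_k U \ar[r] \ar[d]_{f\times\id_U} & X\times^{G} U \ar[d]^{f\times^{G} \id_U} \\
Y\times_k U \ar[r] & Y\times^{G} U
}
$$
is cartesian, with horizontal arrows the quotient maps by the diagonal $G$-action. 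Since $G$ acts freely on $U$ with principal bundle quotient $U\to U/G$, these horizontal arrows form a faithfully flat, quasi-compact (indeed fppf) cover of $Y\times^{G}U$.

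In this trivialized picture the claim is transparent: $f\times\id_U$ is simply the base change of $f$ along the structure morphism $U\to\Spec k$, and each of the listed properties (vector bundle, flat, smooth, proper, regular immersion, open or closed immersion) is stable under arbitrary base change. Hence $f\times\id_U$ inherits property~{\bf P} from $f$, which takes care of the ``trivial'' case alluded to in the text.

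The remaining step is to transport property~{\bf P} from the left vertical arrow back to $f\times^{G}\id_U$ via faithfully flat descent applied to the cartesian square above. For each property in the list the relevant descent statement along an fppf cover is classical: flatness, smoothness and properness are recorded in EGA~IV; open immersions, closed immersions, and regular immersions descend in the same way; and the vector bundle property follows from Grothendieck's descent theorem for quasi-coherent sheaves combined with descent of local freeness. I expect the only real obstacle is assembling these individual descent statements uniformly, i.e.\ checking that each of the named properties is indeed fppf-local on the target; once these references are in place, the lemma follows at once.
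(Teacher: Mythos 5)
Your proposal matches the paper's approach exactly: the paper cites Edidin--Graham \cite[Prop.~2 and Lem.~1]{EdGr98} and remarks that the statement is straightforward for a trivial torsor and follows in general by descent, which is precisely the reduction you carry out via the cartesian square $X\times_k U \to X\times^G U$ over $Y\times_k U \to Y\times^G U$ and fppf descent of each listed property. The only point worth flagging explicitly (and you do gesture at it) is that for the ``vector bundle'' property one must descend not just the morphism but the module/group-scheme structure, for which the descent datum is supplied by the $G$-equivariance of the bundle $f$.
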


\smallbreak

\begin{ntt}
We also need the following fact. Let
$$
\xymatrix{
X' \ar[r]^-{f'} \ar[d]_-{g'} & Y' \ar[d]^-{g} \\
X \ar[r]^-{f} & Y
}
$$
be a cartesian square of $G$-varieties and $(U,V)$ a pair which
is admissible for all varieties in the square. Then there is a natural
and unique morphism
$$
X'\times^{G}U\too X\times^{G}U\,\times_{Y\times^{G}U}\,
    Y'\times^{G}U\, ,
$$
which is an isomorphism if $U\too U/G$ is a trivial $G$-torsor.
Hence by descent, see \eg~\cite[Thm.\ 2.55 and Lem.\ 4.44]{FAG}, it is an isomorphism in general.
\end{ntt}

\smallbreak

The morphism
$$
\id_{X}\times^{G}p_{U}\colon X\times^{G}(V_{1}\oplus U)\too X\times^{G}U\, ,
$$
where $p_{U}\colon V_{1}\oplus U\too U$ is the projection,
and the inclusion
$$
\id_{X}\times^{G}\iota_{U}\colon X\times^{G}(V_{1}\oplus U)\too X\times^{G}W\, ,
$$
where $\iota_{U}\colon W\hookrightarrow V_{1}\oplus U$, induce homomorphisms
$$
\xymatrix{
{\HM_{i+l_{1}+l-s}(X\times^{G}W,\CM_{n-l_{1}-l+s})} \ar[rd]^-{(\id_{X}\times^{G}\iota_{U})^{\ast}} &
\\
 & {\HM_{i+l_{1}+l-s}(X\times^{G}(V_{1}\oplus U),\CM_{n-l_{1}-l+s})} \rlap{\, .}
\\
{\HM_{i+l-s}(X\times^{G}U,\CM_{n-l+s})} \ar[ru]_-{(\id_{X}\times^{G}p_{U})^{\ast}}
}
$$
Both maps are isomorphisms. The first since the dimension of the closed
complement of $X\times^{G}(V_{1}\oplus U)$ in $X\times^{G}W$ is smaller
than $i+l_{1}+l-s$, and the second by homotopy invariance (recall
that $\id_{X}\times^{G}p_{U}$ is a vector bundle by Lemma~\ref{P-Lem}).
Similarly we have an isomorphism
$$
\HM_{i+l_{1}+l-s}(X\times^{G}W,\CM_{n-l_{1}-l+s})\,\xrightarrow{\;\simeq\;}\,
       \HM_{i+l_{1}-s}(X\times^{G}U_{1},\CM_{n-l_{1}+s})\, ,
$$
and hence both pairs define natural isomorphic groups.

\begin{ntt}[Pull-backs and push-forwards]
Let now $f\colon X\too Y$ be a morphism of 
$G$-varieties, and $(U,V)$ and $(U_{1},V_{1})$ two
$(X,G)$- and $(Y,G)$-admissible pairs as
above. We set $d=\dim X -\dim Y$. Then we have by the
functorial properties of push-forward and pull-back maps
in cycle homology (using Lemma~\ref{P-Lem} to see that
the maps in question are defined) two commutative diagrams:

\smallbreak

(i)\ \ 
If $f$ is flat of constant relative dimension or~$Y$ is a
smooth $k$-variety then
$$
\xymatrix{
\HM_{\ast +l-s}(Y\times^{G}U,\CM_{n-l+s}) \ar[rr]^-{(f\times^{G}\id_{U})^{\ast}}
    \ar@{-}[d]^-{\simeq} & & \HM_{\ast +d+l-s}(X\times^{G}U,\CM_{n-l-d+s}) \ar@{-}[d]_-{\simeq}
\\
\HM_{\ast +l_{1}-s}(Y\times^{G}U_{1},\CM_{n-l+s}) \ar[rr]^-{(f\times^{G}\id_{U_{1}})^{\ast}} & &
            \HM_{\ast +d+l_{1-s}}(X\times^{G}U_{1},\CM_{n-l_{1}-d+s})
}
$$
is a commutative diagram whose column arrows are natural isomorphisms, and

\smallbreak

(ii)\ \ 
If~$f$ is proper there is another commutative diagram
$$
\xymatrix{
\HM_{\ast +l-s}(X\times^{G}U,\CM_{n-l+s}) \ar[rr]^-{(f\times^{G}\id_{U})_{\ast}}
    \ar@{-}[d]^-{\simeq} & & \HM_{\ast +l-s}(X\times^{G}U,\CM_{n-l+s}) \ar@{-}[d]_-{\simeq}
\\
\HM_{\ast +l_{1}-s}(X\times^{G}U_{1},\CM_{n-l_{1}+s}) \ar[rr]^-{(f\times^{G}\id_{U_{1}})_{\ast}} & &
            \HM_{\ast +l_{1}-s}(X\times^{G}U_{1},\CM_{n-l_{1}+s}) \rlap{\, ,}
}
$$
whose column arrows are again natural isomorphisms.

\smallbreak

Let now $f\colon X\to Y$ be a morphism of $G$-varieties and
$(U,V)$ a $(G,X)$- and $(G,Y)$-admissible pair.

\smallbreak

(i)\ \ 
If either~$f$ is flat of constant relative dimension
or~$Y$ is smooth, we define the pull-back
morphism 
$$
f_{G}^{\ast}\colon \HM^{G}_{i}(Y,\CM_{n})\too\HM^{G}_{i+d}(X,\CM_{n+d})$$ as
$$
(f\times^{G}\id_{U})^{\ast}\colon \HM_{i+l-s}(Y\times^{G}U,\CM_{n-l+s})\too
           \HM_{i+l-s+d}(X,\CM_{n+l-s-d})\, ,
$$
where $d=\dim X-\dim Y$.

\smallbreak

(ii)\ \ 
If~$f$ is proper, we define the push-forward morphism
$$f_{G\ast}\colon \HM^{G}_{i}(X\CM_{n})\too\HM^{G}_{i}(Y,\CM_{n})$$ as
$$
(f\times^{G}\id_{U})_{\ast}\colon \HM_{i}(X,\CM_{n})\too
           \HM_{i}(Y,\CM_{n})\, .
$$

\smallbreak

With these definitions $G$-equivariant
cycle homology is a contravariant functor on the category
of smooth and $G$-varieties and also
covariant for proper morphisms. Moreover, if
$$
\xymatrix{
X' \ar[r]^-{g'} \ar[d]_-{f'} & X \ar[d]^-{f}
\\
Y' \ar[r]^-{g} & Y 
}
$$
is a cartesian square with~$f$ proper and~$g$ flat of constant
relative dimension or all varieties in the diagram are smooth
over~$k$ then we have
$$
f'_{G\ast}\circ g^{\ast}_{G}\, =\, g'^{\ast}_{G}\circ f_{G\ast}\, .
$$
\end{ntt}

\begin{ntt}[Axioms]\label{axioms}
There is the localization sequence
$$
\xymatrix{
\ldots\, \ar[r]^-{\iota_{G\ast}} & \HM_{i}^{G}(X,\CM_{n}) \ar[r]^-{j^{\ast}_{G}} &
  \HM_{i}^{G}(X\setminus Z,\CM_{n}) \ar[r]^-{\partial} & \HM_{i-1}^{G}(Z,\CM_{n})
         \ar[r]^-{\iota_{G\ast}} & \,\ldots
}
$$
for a closed $G$-embedding $\iota\colon  Z\hookrightarrow X$
with open $G$-equivariant complement $j\colon X\setminus Z\hookrightarrow X$
which follows from the localization sequence in ordinary cycle homology. And
if $\pi\colon E\too X$ is a $G$-vector bundle of rank~$r$, \ie a $G$-linear
bundle, then $$\pi\times^{G}\id_{U}\colon E\times^{G}U\too X\times^{G}U$$
is a vector bundle, see~\cite[Lem.\ 1]{EdGr98}, and therefore the
pull-back $$\pi^{\ast}\colon \HM_{i}^{G}(X,\CM_{n})\too\HM_{i+r}^{G}(E,\CM_{n-r})$$
is an isomorphism by homotopy invariance of cycle homology.
Finally, $\HM_{i}^{G}(X,\CM_{n})$ can be extended to $\Ess_k$ and satisfies (C) by \cite[52.F]{AGQ}
or \cite[p.320]{Ro96}. 
\end{ntt}

\begin{ex}
\label{G-cohTorsorEx}
If $f\colon X\to Y=X/G$ is a $G$-torsor then we have a natural
isomorphism
$$
\HM_{i-s}(Y,\CM_{n+s})\,\simeq\,\HM_{i}^{G}(X,\CM_{n})
$$
where $s=\dim X-\dim Y=\dim G$, for all $i\in\N$ and $n\in\zz$.
This can be seen as follows, \cf~\cite[Expl.\ 2.3.2]{Gu07}:

\smallbreak

We choose a $(X,G)$-admissible pair $(U,V)$. Let $l=\dim V$.
Then the closed complement of $X\times^{G}U=(X\times_{k}U)/G$
in $(X\times_{k}V)/G$ has dimension less than $i+l-s$ and
therefore we have by the localization sequence an isomorphism
$$
\HM_{i+l-s}((X\times_{k}V)/G,\CM_{n-(l-s)})
\xrightarrow{\; j^{\ast}\;}\,
\HM_{i+l-s}(X\times^{G}U,\CM_{n-(l-s)})\, =\,
\HM_{i}^{G}(X,\CM_{n})\, ,
$$
where $j\colon (X\times_{k}U)/G\hookrightarrow (X\times_{k}V)/G$
is the  corresponding open immersion (note that the target
of~$j$ exists in the category of varieties since by assumption
$X\to X/G$ is a $G$-torsor). By~\cite[Lem.\ 1]{EdGr98} we know that
$(X\times_{k}V)/G\to X/G$ is a vector bundle of rank~$l$ and
so by homotopy invariance we have
$$
\HM_{i+l-s}((X\times_{k}V)/G,\CM_{n-(l-s)})\,\simeq\,
   \HM_{i-s}(X/G,\CM_{n+s})\, .
$$
\end{ex}

\begin{ntt}[Restriction map]
If $G_{1}\subseteq G$ is a closed subgroup and~$X$ a 
$G$-variety over $k$, we can choose a $(X,G)$- and
$(X,G_{1})$-admissible pair $(U,V)$. Then we have a morphism
of $k$-varieties $(X\times_{k}U)/G_{1}\to (X\times_{k}U)/G$
which induces (via pull-back) a homomorphism of equivariant
cycle homology groups
$$
\res_{G_{1}}^{G}\colon \HM^{G}_{i}(X,\CM_{n})\too
     \HM^{G_{1}}_{i}(X,\CM_{n})
$$
for any cycle module~$\CM_{\ast}$ called {\it restriction
homomorphism}. In particular if $G_{1}$ is the trivial
group we have a (forgetful) morphism $\HM^{G}_{i}(X,\CM_{n})\too
\HM_{i}(X,\CM_{n})$ from $G$-equivariant cycle homology to
ordinary cycle homology.
\end{ntt}

\begin{ntt}[The first Chern class]
Let  $\pi\colon L\to X$ be a $G$-equivariant line bundle with
zero section $\sigma\colon X\to L$. Then~$\sigma$ is a
$G$-equivariant closed embedding and so induces a
morphism $\sigma\times^{G}\id_{U}\colon X\times^{G}U\to L\times^{G}U$
which is the zero section of the line bundle (see Lemma~\ref{P-Lem})
$$
\pi\times^{G}\id_{U}\colon L\times^{G}U\too X\times^{G}U\, .
$$
The {\it first Chern class} (or also called {\it Euler class})
of~$L$ is then defined as the operator
$$
\ch_{1}(L)^{G}\, :=\, (\pi^{\ast}_{G})^{-1}\circ\,\sigma_{G\ast}
\colon\HM_{\ast}^{G}(X,\CM_{\ast})\too\HM_{\ast -1}^{G}(X,\CM_{\ast +1})\, .
$$

\smallbreak

This map commutes with push-forwards and pull-backs. More
precisely, assume we have a cartesian square of $G$-equivariant
morphisms, where~$L$ and~$L'$ are $G$-equivariant line bundles
over~$X$ and~$X'$, respectively:
$$
\xymatrix{
L' \ar[r]^{f'} \ar[d]_-{\pi'} & L \ar[d]^-{\pi}
\\
X' \ar[r]^-{f} & X \rlap{\, .}
}
$$
Denote by~$\sigma$ and~$\sigma'$ the zero sections
of~$L$ and~$L'$, respectively. These are also $G$-equivariant
morphisms and so we get a cartesian square
$$
\xymatrix{
L'_{G} \ar[rr]^-{f'_{G}} \ar@/^/[d]^-{\pi'_{G}} & & L_{G}
       \ar@/^/[d]^-{\pi_{G}}
\\
X'_{G} \ar[rr]^-{f_{G}} \ar@/^/[u]^-{\sigma'_{G}} & & X_{G}
       \ar@/^/[u]^-{\sigma_{G}} \rlap{\, ,}
}
$$
where we have set $Y_{G}:=Y\times^{G}U$ for any $G$-variety~$Y$,
$g_{G}:=g\times^{G}\id_{U}$ for all $G$-equivariant morphisms
$g\colon Y'\too Y$, and assumed that the pair~$(U,V)$ is admissible
for all varieties in question. A straightforward computation using
this diagram, see~\cite[Prop.\ 53.3]{AGQ} for the analogous
result in ordinary cycle homology, shows
$$
\ch_{1}^{G}(L')\circ\, f^{\ast}_{G}\, =\, f^{\ast}_{G}\circ\,\ch_{1}^{G}(L)\quad \text{ if }f \text{ is flat, and }
$$
$$
\ch_{1}^{G}(L)\circ\, f_{G\ast}\, =\, f_{G\ast}\circ\,\ch_{1}^{G}(L')\quad\text{ if } f \text{ is proper.}
$$
\end{ntt}

\begin{rem}
(i)\ \ 
The Chern class homomorphism
$$
\ch_{1}^{G}(L)\colon \HM_{i}^{G}(X,\MK{-i})=\CH_{i}^{G}(X)\too
    \CH_{i-1}^{G}(X)=\HM_{i-1}^{G}(X,\MK{-i+1})
$$
coincides with the first Chern class defined in
Edidin and Graham~\cite[Sect.\ 2.4]{EdGr98}.

\smallbreak

(ii)\ \ 
As for ordinary cycle homology in~\cite{AGQ} one can use the
Euler class of a vector bundle to prove the projective bundle
theorem and then use this to define the higher Chern classes in
$G$-equivariant cycle homology. We leave this to
the interested reader.
\end{rem}

\begin{ntt}[An equivariant spectral sequence]
\label{eqspecsec}
We provide now a version of
Merkurjev's~\cite{Me05} equivariant $K$-theory spectral
sequence for equivariant cycle homology. 

\smallbreak

Let $T$ be a $k$-split torus of rank~$m$, and $\chi\colon T\to\Gm$
a character. The algebraic group~$T$ acts via~$\chi$ on the
affine line $\A^{1}_{k}$ defining a $T$-equivariant line
bundle $\A^{1}_{k}\to\pt$ which we denote by~$L(\chi)$
(trivial action of~$T$ on the base point~$\pt$).
If~$p\colon X\to\pt$ is a $T$-scheme we denote the
pull-back~$p^{\ast}L(\chi)$ by~$L_{X}(\chi)$. This is also a
$T$-equivariant vector bundle.

\smallbreak

Let $X$
be a  $G$-variety, where~$G$ is an algebraic group over $k$, and $T\subseteq G$ a split
torus of rank~$m$. Let $\chi_{1},\ldots ,\chi_{m}$ be a
basis of the character group~$T^{*}=\Hom (T,\Gm)$, and let~$T$ act
on the affine space~$\A^{m}_{k}=\Spec k[x_{1},\ldots ,x_{m}]$
by
\begin{equation}\label{actT}
t\cdot (a_{1},\ldots ,a_{m})\,\longmapsto\,
  (\chi_{1}(t)\cdot a_{1},\,\ldots\, ,\chi_{m}(t)\cdot a_{m})\, ,
\end{equation}
and on $\A^{m}_{X}=X\times_{k}\A^{m}_{k}$ diagonally. Let
$Z_{i}\subset\A^{m}_{k}$ be the hyperplan defined by $x_{i}=0$
for $i=1,\ldots ,m$. Then $X\times_{k}Z_{i}$ are $T$-subvarieties
of~$\A^{m}_{X}$ and therefore, see Lemma~\ref{P-Lem}, we have
closed subschemes
$$
(X\times_{k}Z_{1})\times^{G}U\, ,\;\ldots\; ,\,
      (X\times_{k}Z_{m})\times^{G}U
$$
of $\A_{X}\times^{G}U$, where $(U,V)$ is a
$(\A^{m}_{X},T)$-admissible pair. Since $U\to U/T$
is a $T$-torsor (by assumption) we have
$$
\bigcap\limits_{j\not\in I}(X\times_{k}Z_{j})\times^{G}U\ =\,
   (X\times_{k}Z_{I})\times^{G}U
$$
for all $I\in\{ 1,\ldots ,m\}$, where
$Z_{I}=\bigcap\limits_{j\not\in I}Z_{j}$.
From Example~\ref{CKconeComplExpl2} we get then
a convergent spectral sequence
\begin{equation}
\label{1-EqSpSeqEq}
\tilde{E}^{p,q}_{1}\, =\;\bigoplus\limits_{|I|=p}
\HM^{T}_{-q-m}(X\times_{k}Z_{I},\CM_{n})\;\;\Longrightarrow\;
\HM^{T}_{-p-q}(X\times_{k}T,\CM_{n})
\end{equation}
for all cycle modules~$\CM_{\ast}$.

\smallbreak

By Example~\ref{G-cohTorsorEx} we have
$\HM^{T}_{-p-q}(X\times_{k}T,\CM_{n})\simeq
\HM_{-p-q-m}(X,\CM_{n+m})$ and since
$$
Z_{I}\, =\,\A^{|I|}_{k}=\Spec k[x_{i},i\in I]\,
\hookrightarrow\A^{m}_{k}
$$
is a $T$-equivariant vector bundle over the base
the pull-back
$$
\pi_{I\, T}^{\ast}\colon \HM^{T}_{-q-m-|I|}(X,\CM_{n+|I|})
\too\HM^{T}_{-q-m}(X\times_{k}Z_{I},\CM_{n})
$$
is an isomorphism, where $\pi_{I}\colon X\times_{k}Z_{I}\to X$
is the projection. Replacing~$q$ by $q+m$ the spectral
sequence takes therefore the following form
\begin{equation}
\label{2-EqSpSeqEq}
E^{p,q}_{1}\, =\;\bigoplus\limits_{|I|=p}
\HM^{T}_{-q-p}(X,\CM_{n+p})\;\;\Longrightarrow\;
\HM_{-p-q}(X,\CM_{n+m})\, .
\end{equation}

\smallbreak

We compute the differential
$$
d^{p,q}_{1}\colon \bigoplus\limits_{|I|=p}
\HM^{T}_{-q-p}(X,\CM_{n+p})\too
\bigoplus\limits_{|J|=p+1}\HM^{T}_{-q-p-1}(X,\CM_{n+p+1})\, .
$$
If $J\not\supseteq I$ the the $IJ$-component of~$d^{p,q}_{1}$
is zero. If $J\supset I$ let $J=\{ i_{1},\ldots ,i_{p+1}\}$
and $I=J\setminus\{ i_{r}\}$ for some $1\leq r\leq p+1$. Then
by Example~\ref{CKconeComplExpl2} the $IJ$-component
of the differential~$\tilde{d}^{p,q}_{1}$ of the spectral
sequence~(\ref{1-EqSpSeqEq}) is equal $(-1)^{r-1}$-times the
push-forward along the closed embedding (see Lemma~\ref{P-Lem})
$$
\iota_{IJ}\times^{T}\id_{U}\colon (X\times_{k}Z_{I})\times^{T}U\,
    \hookrightarrow\, (X\times_{k}Z_{J})\times^{T}U\, ,
$$
where $\iota_{IJ}$ is the closed immersion
$X\times_{k}Z_{I}\hookrightarrow X\times_{k}Z_{J}$.
Using the above identification we have then a commutative diagram
$$
\xymatrix{
E^{p,q}_{1}\, = & \HM_{-q-p}^{T}(X,\CM_{n+p})
       \ar[r]^-{\pi^{\ast}_{I\, T}}_-{\simeq} \ar[d]_-{d^{p,q}_{1}} &
               \HM_{-q}^{T}(X\times_{k}Z_{I},\CM_{n})
                       \ar[d]^-{(-1)^{r-1}\iota_{IJ,T\,\ast}}
\\
E^{p+1,q}_{1}\, = & \HM_{-q-p-1}^{T}(X,\CM_{n+p+1})
     \ar[r]^-{\pi^{\ast}_{J\, T}}_-{\simeq} &
               \HM_{-q}^{T}(X\times_{k}Z_{J},\CM_{n}) \rlap{\, ,}
}
$$
and therefore
$$
\begin{array}{r@{\; =\;}l}
   (-1)^{r-1}d^{p,q}_{1}(x) & (\pi^{\ast}_{J\, T})^{-1}
         \big[\iota_{IJ,T\,\ast}\big(\pi^{\ast}_{I\, T}(x)\big)\big] \\[2mm]
     & (\pi^{\ast}_{J\, T})^{-1}\Big[\iota_{IJ,T\,\ast}\Big(\one^{T}_{X\times Z_{I}}
             \cap\big( (\pi_{J}\times^{T}
                   \id_{U})\circ (\iota_{IJ}\times^{T}\id_{U})^{\ast}(x)\big)\Big)\Big] \\[2mm]
     & \big[(\pi_{J\, T}^{\ast})^{-1}(\iota_{IJ,T\,\ast}(\one^{T}_{X\times Z_{I}}))\big]
           \cap x \\[2mm]
     & \ch_{1}(L_{X}(\chi_{r}))\cap x
\end{array}
$$
for all $x\in\HM_{-q-p}^{T}(X,\CM_{n+p})$.
The last equation since $\iota_{IJ}$ is the zero section
of the pull-back of~$L_{X}(\chi)$ along the projection
$X\times_{k}Z_{I}\to X$.
\end{ntt}

\begin{ntt}[\bf Equivariant cycle cohomology]
From now on all varieties are assumed to be smooth.
Given a smooth equidimensional variety $X$ we define its
$i$-th {\it cycle cohomology group} as
$$
\HM^{i}(X,\CM_{n})\, :=\;\HM_{\dim X -i}(X,\CM_{n-\dim X})\, .
$$
A pairing
$$
\MK{i}(E)\,\times\,\CM_{j}(E)\too\CM_{i+j}(E)
$$
induces the so called {\it intersection product} of cycle
cohomology groups
$$
\HM^{i}(X,\MK{m})\,\times\,\HM^{j}(X,\CM_{n})\;\too\,
   \HM^{i+j}(X,\CM_{m+n})\, ,\;\; (\alpha ,x)\,\longmapsto\,\alpha\cap x\, ,
$$
see~\cite[Sect.\ 14]{Ro96}. This generalizes the usual intersection
product of Chow groups as defined for instance in the book~\cite{IT}
of Fulton.
\end{ntt}

\begin{dfn}\label{keydef}
It follows by descent (since $U\too U/G$ is a principal bundle)
that $X\times^{G}U$ is an equidimensional smooth variety, too.
Hence we can define
$$
\HM^{i}_{G}(X,\CM_{n})\, :=\HM_{\dim X-i}^{G}(X,\CM_{n-\dim X})
 \, =\,\HM^{i}(X\times^{G}U,\CM_{n})
$$
and call it the $i$-th {\it $G$-equivariant cycle cohomology group}
of~$X$ with values in $\CM_{\ast}$. 
\end{dfn}

The pairing of
$\HM^{\ast}(X\times^{G}U,\MK{\ast})$ with $\HM^{\ast}(X\times^{G}U,\CM_{\ast})$
induces a pairing
$$
\HM^{i}_{G}(X,\MK{m})\,\times\,\HM^{j}_{G}(X,\CM_{n})\too
       \HM^{i+j}_{G}(X,\CM_{m+n})\, ,\;\, (\alpha,x)\,\longmapsto\, \alpha\cap x\, .
$$
Since the pull-back of ordinary cycle cohomology groups respects the
product, we can use Bogomolov's double filtration argument
to check that this definition does not depend (up to natural isomorphism)
on the choice of an $(X,G)$-admissible pair.

\smallbreak

It follows from the properties of the ordinary ``intersection''
product in cycle cohomology that~$\HM^{\ast}_{G}(X,\MK{\ast})$
becomes a skew-commutative ring with this product, \ie
$$
\alpha\cap\beta\, =\, (-1)^{(i+m)\cdot (j+n)}\cdot\beta\cap\alpha
$$
for all $\alpha\in\HM_{G}^{i}(X,\MK{m})$ and
$\beta\in\HM_{G}^{j}(X,\MK{n})$, \cf~\cite[Prop.\ 56.4]{AGQ}.
We denote the neutral element of this multiplication
by~$\one^{G}_{X}$. This is the rational equivalence class
of $X\times^{G}U$ in $\HM^{0}_{G}(X,\MK{0})=\HM^{0}(X\times^{G}U,\MK{0})$.

\smallbreak

\begin{ntt}[Projection formulas]
From the properties of the ordinary intersection
product shown in~\cite[Prop.\ 5.9]{Deg06} we obtain
the following: 

\smallbreak

Let $f\colon X\too Y$ be a morphism of equidimensional smooth $G$-varieties. Then
\begin{itemize}
\item[(i)]
$f^{\ast}_{G}(\beta\cup y)\, =\, f^{\ast}_{G}(\beta)\cap f^{\ast}_{G}(y)$, and

\smallbreak

\item[(ii)]
if $f$ is proper the equations (projection formulas)
$$
f_{G\ast}(\alpha\cap f^{\ast}_{G}(y))\, =\, f_{G\ast}(\alpha)\cap y\;\;\;\mbox{and}\;\;\;
       f_{G\ast}(f^{\ast}_{G}(\beta)\cap x)\, =\,\beta\cap f_{G\ast}(x)
$$
\end{itemize}
for all $\alpha\in\HM^{i}_{G}(X,\MK{m})$, $\beta\in\HM^{i}_{G}(Y,\MK{m})$,
$x\in\HM^{j}_{G}(X,\CM_{n})$, and $y\in\HM^{j}_{G}(Y,\CM_{n})$.

\smallbreak

We denote by $\ch_{1}(L)$
also the element $\ch_{1}(L)(\one^{G}_{X})\in\CH^{1}_{G}(X)$.
With this notation we have by the projection formula
$$
\ch_{1}(L)(x)\, =\,\ch_{1}(L)\cap x
$$
for all $x\in\HM^{i}_{G}(X,\CM_{n})$ and all $i\in\N$
and~$n\in\zz$ if the variety~$X$ is smooth $k$.
\end{ntt}

This section can be summarized by the following  (see~\ref{keydef} and \ref{axioms})

\begin{thm}
The functor $X\mapsto\HM^{\ast}_{G}(X,\CM_{\ast})$ provides an example of
a graded essential $G$-equivariant pretheory. 
\end{thm}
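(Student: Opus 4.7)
The plan is to unpack Definition~\ref{defgrpre} into three things to check and show that each one has already been established (or follows at once) from the material in this section. First I extend the functor to $G$-pairs $(X,U)$. Given an $(X,G)$-admissible pair $(W,V)$ (which we may take simultaneously admissible for $X$ and $U$, cf.~\ref{EchSubSect}), the closed immersion $Z\times^{G}W \hookrightarrow X\times^{G}W$, with $Z = X\setminus U$, is a closed immersion of smooth varieties (Lemma~\ref{P-Lem}), and I set
$$
\HM^{i}_{G}(X,U;\CM_{n}) \,:=\, \HM^{i}(X\times^{G}W \text{ with support on } Z\times^{G}W,\CM_{n}),
$$
defined by the cone of the restriction $\CK_{\smb}(X\times^{G}W,\CM_{\ast})\to\CK_{\smb}(U\times^{G}W,\CM_{\ast})$. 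Independence from the choice of $(W,V)$ is proven by the same Bogomolov double-filtration argument as in~\ref{EchSubSect} applied simultaneously to $X\times^{G}W$ and $U\times^{G}W$.

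For (H), if $p\colon\A^{n}_{X}\to X$ is a $G$-equivariant vector bundle, then $p\times^{G}\id_{W}\colon \A^{n}_{X}\times^{G}W\to X\times^{G}W$ is a vector bundle by Lemma~\ref{P-Lem}, and hence the pull-back is an isomorphism by homotopy invariance of ordinary cycle (co)homology; this was already noted in~\ref{axioms}. The long exact localization sequence
$$
\ldots\to \HM^{i}_{G}(X,\CM_{n})\to\HM^{i}_{G}(U,\CM_{n})\to\HM^{i+1}_{G}(X,U;\CM_{n})\to\ldots
$$
is the long exact sequence of the mapping cone defining $\HM^{\ast}_{G}(X,U;\CM_{\ast})$, or equivalently the localization sequence in~\ref{axioms} applied to the smooth pair $(X\times^{G}W,\,U\times^{G}W)$, reindexed by the cohomological convention of~\ref{keydef}.

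For the essential property, let $(\mathcal{X},\LU) = \varprojlim_{i}(X_{i},U_{i})$ be a cofiltered limit of $G$-equivariant open embeddings of $G$-pairs. Choose a single admissible pair $(W,V)$ compatibly with all $X_{i}$'s. Then $\mathcal{X}\times^{G}W = \varprojlim_{i}(X_{i}\times^{G}W)$ along flat open embeddings (the formation of $(-)\times^{G}W$ preserves open immersions, again by Lemma~\ref{P-Lem}), and similarly for $\LU\times^{G}W$. By~\cite[52.F]{AGQ} (cf.\ also~\cite[p.~320]{Ro96}), cycle cohomology commutes with such cofiltered limits of smooth varieties along flat open immersions, so
$$
\varinjlim_{i}\HM^{\ast}(X_{i}\times^{G}W,\CM_{\ast}) \,\xrightarrow{\;\simeq\;}\, \HM^{\ast}(\mathcal{X}\times^{G}W,\CM_{\ast}),
$$
and analogously for $\LU$. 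A five-lemma argument on the localization sequences (which are already long exact at the level of complexes) transfers this to the relative groups, producing the required surjection (in fact an isomorphism) $\varinjlim_{i}\HM^{\ast}_{G}(X_{i},U_{i};\CM_{\ast}) \to \HM^{\ast}_{G}(\mathcal{X},\LU;\CM_{\ast})$.

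The main content, such as it is, lies in the first paragraph: setting up the relative theory on pairs and verifying that it is well-defined up to canonical isomorphism independently of the chosen admissible $(W,V)$. Once that is in place, all three axioms are immediate consequences of the corresponding statements for the single-variety case, which were recorded in~\ref{axioms} and \ref{keydef}. There is no substantial obstacle; the theorem is a recapitulation packaging the work done earlier in the section.
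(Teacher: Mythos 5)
Your proof is correct and fills in exactly the details the paper leaves implicit: the paper's "proof" is merely a backward reference to \ref{keydef} and \ref{axioms}, whereas you supply the missing construction of the relative theory on $G$-pairs (via the cone of the restriction on cycle complexes, which is quasi-isomorphic to the cycle complex of the closed complement that appears in the paper's localization sequence) and then verify (H), the long exact sequence, and the essential property by the same Bogomolov argument, Lemma~\ref{P-Lem}, and the colimit-compatibility from~\cite[52.F]{AGQ}. This is essentially the paper's intended argument, carried out more completely.
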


\begin{ex}[Equivariant Chow groups]\label{equivChow}
We have
$$
\HM_{i}^{G}(X,\MK{-i})\, =\,\CH^{G}_{i}(X)\, ,
$$
where $\CH^{G}_{\ast}$ denotes the $G$-equivariant Chow-theory
of Edidin and Graham~\cite{EdGr98}.

\smallbreak

Since $\HM_{j}(Y,\MK{-i})=0$ for $j\leq i-1$ we know by the
localization sequence that for a $G$-equivariant open embedding
$\iota\colon W\hookrightarrow X$ the pull-back
$$
\iota_{G}^{\ast}\colon \CH^{G}_{i}(X)\too\CH^{G}_{i}(W)
$$
is surjective. Identifying $\CH^i_G(X)$ with $\CH^G_{\dim X-i}(X)$ we obtain that the functor
$$
\hh_G\colon X\,\longmapsto\,\CH^{\ast}_{G}(X)\, =\,
   \bigoplus\limits_{i\in\zz}\CH^{i}_{G}(X)
$$
provides an example of an essential $G$-equivariant pretheory.
\end{ex}


\section{Torsors and equivariant maps}

In the present section we
generalize the result of Karpenko and Merkurjev~\cite[Thm.\ 6.4]{KaMe06} to
an arbitrary equivariant pretheory. Our arguments follow closely the exposition of \cite[\S6]{KaMe06}.

\smallbreak

Let $S=\GL(V)$ be the group of 
automorphisms of a finite dimensional $k$-vector space~$V$.
Let $H$ be an algebraic subgroup of $S$. 
Consider $S$ as a (left) $H$-variety.

\smallbreak

Let $\hh_H$ be a $H$-equivariant pretheory over $k$.
Following the proof of \cite[Prop.\ 6.2]{KaMe06} 
we embed $S$ into the affine space $\End_k(V)$ as a 
$S$-equivariant (and, hence, $H$-equivariant) open subset. 

\smallbreak

Let $\phi\colon S\to\pt$ denote the structure map. 
The induced pull-back $\phi_H^{*}$ factors as the
composite of pull-backs
$$
\hh_{H}(\pt) \stackrel{\simeq} \too \hh_{H}(\End(V)) \twoheadrightarrow \hh_{H}(S),
$$
where the first map is an isomorphism by homotopy invariance
and the second map is surjective by the localization property. This
proves that

\begin{lem}\label{varphiLem} The induced pull-back $\phi_H^*$ is surjective.
\end{lem}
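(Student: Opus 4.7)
The plan is essentially the two lines sketched in the paragraph preceding the lemma statement; the content is a clean application of the two pretheory axioms to a convenient factorization of $\phi$. First I would factor the structure morphism as
$$
S \xrightarrow{\;\iota\;} \End_k(V) \xrightarrow{\;p\;} \pt,
$$
where $\iota$ is the given $H$-equivariant open embedding (the locus $\det \neq 0$ inside the affine space $\End_k(V)$ is preserved by any $H$-action coming from the inclusion $H \subseteq S$) and $p$ is the structure map of $\End_k(V) \cong \A_k^{(\dim V)^2}$.

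Next I would apply $\hh_H$ to obtain $\phi_H^\ast = \iota_H^\ast \circ p_H^\ast$. Axiom (H) applies to $p$ because the homotopy invariance axiom in Definition~\ref{dfnpre} imposes no restriction on the action upstairs (only on $\pt$, where the action is forced to be trivial), so $p_H^\ast\colon \hh_H(\pt) \to \hh_H(\End_k(V))$ is an isomorphism. Axiom (L) applies to $\iota$ since it is an $H$-equivariant open embedding of smooth $H$-varieties, so $\iota_H^\ast\colon \hh_H(\End_k(V)) \to \hh_H(S)$ is surjective. The composite of an isomorphism with a surjection is surjective, which is the claim.

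There is no genuine obstacle here; the only substantive point, and the reason the statement deserves to be isolated as a lemma, is the geometric observation that $\GL(V)$ sits as an $H$-stable principal open subvariety inside the affine space $\End_k(V)$, which is precisely what allows the two axioms of an equivariant pretheory to be applied in tandem. Everything else is formal.
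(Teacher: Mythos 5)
Your proof is correct and is essentially the paper's own argument: factor $\phi$ through the $H$-equivariant open embedding $S\hookrightarrow\End_k(V)$, then apply axiom (H) to $\End_k(V)\to\pt$ and axiom (L) to the open embedding.
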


Let $\mu_{s}\colon S\to S$ denote the right multiplication
by $s\in S(k)$.
Since $\phi\circ \mu_{s}=\varphi$ as morphisms over $k$ and $\mu_{s}$ is
$H$-equivariant, 
we have $(\mu_{s})_{H}^{*}\circ \phi^{*}_{H}=\phi^{*}_{H}$.
Since $\phi_{H}^{*}$
is surjective by Lemma~\ref{varphiLem}, this proves that

\begin{lem}\label{muLem}
The induced pull-back 
$(\mu_{s})_{H}^{*}\colon \hh_{H}(S)\to \hh_{H}(S)$
is the identity.
\end{lem}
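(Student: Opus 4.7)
The plan is to deduce the lemma as an immediate corollary of Lemma~\ref{varphiLem}, by a one-line diagram chase. The key observation is already recorded in the paragraph preceding the statement: the morphisms $\phi$ and $\mu_s$ both land in or factor through $\pt$ in a compatible way, namely $\phi \circ \mu_s = \phi$ as morphisms of $k$-schemes, since both sides are simply the structure map $S \to \pt$. Applying the contravariant functor $\hh_H$ to this equality yields $(\mu_s)_H^* \circ \phi_H^* = \phi_H^*$ in $\End(\hh_H(\pt) \to \hh_H(S))$, provided that both $\phi$ and $\mu_s$ are morphisms in $\GSm_k$ with respect to the $H$-action.

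The only thing I need to verify is that $\mu_s$ is $H$-equivariant. Here the left $H$-action on $S$ is by left multiplication in $S$, so for any $h \in H$ and $x \in S$ one has $\mu_s(h \cdot x) = (hx)s = h(xs) = h \cdot \mu_s(x)$ by associativity in $S$; equivariance of $\phi$ is automatic since $H$ acts trivially on $\pt$. Hence both $\phi$ and $\mu_s$ live in $\GSm_k$ and the functorial identity above makes sense.

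With this in hand, the proof concludes as follows. Given any $y \in \hh_H(S)$, Lemma~\ref{varphiLem} produces $x \in \hh_H(\pt)$ with $\phi_H^*(x) = y$, and then
\[
(\mu_s)_H^*(y) \;=\; (\mu_s)_H^*\bigl(\phi_H^*(x)\bigr) \;=\; \bigl((\mu_s)_H^* \circ \phi_H^*\bigr)(x) \;=\; \phi_H^*(x) \;=\; y.
\]
Since $y$ was arbitrary, $(\mu_s)_H^* = \id$. There is no genuine obstacle: the entire content of the lemma is packaged in the surjectivity of $\phi_H^*$ (from the previous lemma) combined with the trivial identity $\phi \circ \mu_s = \phi$.
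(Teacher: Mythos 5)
Your proof is correct and is precisely the argument given in the paper: $\phi\circ\mu_s=\phi$, both maps are $H$-equivariant, so functoriality gives $(\mu_s)_H^*\circ\phi_H^*=\phi_H^*$, and surjectivity of $\phi_H^*$ from Lemma~\ref{varphiLem} forces $(\mu_s)_H^*=\id$. Your explicit check of $H$-equivariance of $\mu_s$ (left $H$-action commutes with right multiplication by associativity) is a worthwhile detail that the paper asserts without comment.
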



Let $G$ be an algebraic subgroup of $S$ such that $H\subseteq G\subseteq S$ 
so that $G$ is considered as a (left) $H$-variety.
Let $E$ be a (left) $G$-variety over $k$ and 
let $\eta_E\colon \Spec K\to E$ denote its generic point, where $K=k(E)$.

\smallbreak

Consider the $G$-equivariant (and, hence, $H$-equivariant) map
$$
\psi_E\colon G_K \,=\, G\times_{\Spec k}\Spec K
     \stackrel{(\id,\eta_E)}\too G\times_{\Spec k} E \too E
$$
which takes the identity of $G$ to the generic point of $E$.
Suppose that there is a
$G$-equivariant map $\rho\colon E\to S$ over $k$.
Then there is a commutative diagram of $H$-equivariant maps
$$
\xymatrix{
G_K\ar[r]^{\psi_E}\ar[d]_i & E \ar[r]^\rho & S \\
S_K \ar[rr]^{\mu_{\rho(\eta_E)}} & & S_K \ar[u]_p
}
$$
where the map $i$ is the embedding, $p$
is the projection $S_K\,=\, S\times_{\Spec k}\Spec K \to S$ to the first factor and
the bottom horizontal map is the multiplication by $\rho(\eta_E)$.

By the diagram the pull-back of the composite
$(\psi_E)_H^*\circ \rho_H^*=(\rho\circ \psi_E)^*_H$ coincides
with the pull back $(p\circ \mu_{\rho(\eta_E)}\circ i)^*_H$. Here the map $(\psi_E)^*_H\colon \hh_H(E)\to \bar\hh_H(G_K)$
is the canonical map to the colimit. 
By Lemma~\ref{muLem} the latter
coincides with the pull-back $i_H^*\circ p_H^*$, hence, proving the following

\begin{lem}
\label{factorLem} Let $E$ be a $G$-variety together with a $G$-equivariant map $\rho\colon E\to S$. Then we have
$$
(\psi_E)_H^*\circ \rho_H^*=i_H^*\circ p_H^*\colon \hh_H(S) \to \bar\hh_H(G_K).
$$
\end{lem}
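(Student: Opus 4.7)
The plan is to apply $\hh_H$ to the commutative diagram displayed just above the statement, use functoriality to split the composite, and then eliminate $(\mu_{\rho(\eta_E)})_H^{*}$ via a generic-point refinement of Lemma~\ref{muLem}. The first step is to verify that the diagram actually commutes, i.e.\ that $\rho\circ\psi_E = p\circ\mu_{\rho(\eta_E)}\circ i$ as $H$-equivariant morphisms $G_K\to S$. I would check this by chasing $T$-valued points: for $(g\colon T\to G,\;t\colon T\to\Spec K)$ the $G$-equivariance of $\rho$ yields
$$\rho\circ\psi_E(g,t)\;=\;\rho(g\cdot(\eta_E\circ t))\;=\;g\cdot\rho(\eta_E\circ t)\;=\;g\cdot t^{*}\rho(\eta_E),$$
and the same expression is produced by unwinding $p\circ\mu_{\rho(\eta_E)}\circ i(g,t)$.

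Applying the contravariant functor $\hh_H$ then gives
$$(\psi_E)_H^{*}\circ\rho_H^{*}\;=\;i_H^{*}\circ(\mu_{\rho(\eta_E)})_H^{*}\circ p_H^{*}$$
as maps $\hh_H(S)\to\bar\hh_H(G_K)$, so it suffices to show that the middle factor acts trivially, i.e.\ $(\mu_{\rho(\eta_E)})_H^{*}\circ p_H^{*}=p_H^{*}$.

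For this I would mimic the argument of Lemma~\ref{muLem} at the generic-point level. The structure morphism $\phi_K\colon S_K\to\Spec K$ satisfies $\phi_K\circ\mu_{\rho(\eta_E)}=\phi_K$, and hence $(\mu_{\rho(\eta_E)})_H^{*}\circ(\phi_K)_H^{*}=(\phi_K)_H^{*}$; it therefore suffices to establish surjectivity of $(\phi_K)_H^{*}$. This I would check at each finite level $U\subseteq E$ (noting that $\Spec K=\varprojlim_U U$) by factoring through the open embedding $S\hookrightarrow\End(V)$:
$$\hh_H(U)\;\xrightarrow{\;\simeq\;}\;\hh_H(\End(V)\times_k U)\;\twoheadrightarrow\;\hh_H(S\times_k U),$$
where the first map is an isomorphism by homotopy invariance (the trivial vector bundle $\End(V)\times_k U\to U$) and the second is surjective by localization. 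Passing to the colimit and invoking axiom (C) then gives surjectivity of $(\phi_K)_H^{*}$, so $(\mu_{\rho(\eta_E)})_H^{*}$ is the identity, and the lemma follows.

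The main obstacle is the first arrow displayed above: axiom (H) is only stated for $\A^n\to\pt$ rather than for the relative situation $\A^n\times_k U\to U$. This relative strengthening is implicit in the essential-pretheory framework, and is standard in all the explicit examples of Section~\ref{EqCycleHomSect}, but it is the step that requires care; once it is granted, the rest of the argument is purely formal.
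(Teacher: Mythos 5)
Your diagram chase and the resulting factorization $(\psi_E)_H^*\circ\rho_H^* = i_H^*\circ(\mu_{\rho(\eta_E)})_H^*\circ p_H^*$ are exactly what the paper does. Where you diverge is in how you eliminate the middle factor, and there you have introduced an unnecessary obstacle.

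You try to prove that $(\mu_{\rho(\eta_E)})_H^*$ is the identity on all of $\bar\hh_H(S_K)$, by establishing surjectivity of the generic-fibre structure pull-back $(\phi_K)_H^*$. As you rightly observe, the factorization through $\End(V)\times_k U$ requires a relative homotopy invariance $\hh_H(U)\xrightarrow{\simeq}\hh_H(\End(V)\times_k U)$ which is not among the axioms of Definition~\ref{dfnpre}, so your route genuinely stalls at that point. The paper sidesteps this entirely because it only needs $(\mu_{\rho(\eta_E)})_H^*$ to act as the identity on $\Image(p_H^*)$, and for that Lemma~\ref{varphiLem} over the base field $k$ suffices: every $\alpha\in\hh_H(S)$ is $\phi_H^*(\beta)$ for some $\beta\in\hh_H(\pt)$, so $p_H^*(\alpha)=(\phi\circ p)_H^*(\beta)$. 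Since $\mu_{\rho(\eta_E)}$ is a $K$-morphism it commutes with the structure map to $\Spec K$, hence $\phi\circ p\circ\mu_{\rho(\eta_E)}=\phi\circ p$ as $k$-morphisms $S_K\to\pt$, and thus $(\mu_{\rho(\eta_E)})_H^*\bigl(p_H^*(\alpha)\bigr)=(\phi\circ p)_H^*(\beta)=p_H^*(\alpha)$. This is exactly the mechanism of Lemma~\ref{muLem}, applied not as a black-box identity assertion but at the level of its proof, and it needs no relative strengthening of axiom~(H), no essentialness of $\hh_H$, and no passage through $(\phi_K)_H^*$. So the ``main obstacle'' you flag is an artifact of proving a stronger claim than the lemma requires; the weaker claim is all that is used and is available from Lemmas~\ref{varphiLem} and~\ref{muLem} as stated.
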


We are now in position to prove the main result of
this section

\begin{thm}
\label{mainThm}
Let $H\subset G$ be algebraic groups and let $\hh_{H}(-)$
be a $H$-equivariant pretheory.
Then for any $G$-torsor $E$ with $K=k(E)$
we have
$$\Image (\varphi_H^*)\subseteq \Image((\psi_E)_H^*) \text{ in }\bar\hh_{H}(G_K),$$
where $\varphi \colon G_K\to \pt$ is the structure map.
\end{thm}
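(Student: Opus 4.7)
\emph{Plan.} My plan is to combine Lemma~\ref{factorLem} with one additional input --- the existence, for any $G$-torsor~$E$, of a $G$-equivariant morphism $\rho\colon E\to S$ over~$k$ --- and then to factor the structure map $\varphi$ through $\phi\colon S\to\pt$.

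First I would construct $\rho$. Since $E$ is a $G$-torsor and $G\subseteq S=\GL(V)$, the associated contracted product $E\times^{G}S$ is an $S$-torsor over $\Spec k$, and hence trivial by Hilbert~90 ($H^{1}(k,\GL(V))=1$). A trivialization is the same datum as a $k$-point of $E\times^{G}S$, and this in turn corresponds to a $G$-equivariant morphism $\rho\colon E\to S$. Next, the structure map factors as $\varphi\colon G_{K}\xrightarrow{\,i\,}S_{K}\xrightarrow{\,p\,}S\xrightarrow{\,\phi\,}\pt$, so by functoriality of pull-backs
$$
\varphi_{H}^{\ast}\;=\;i_{H}^{\ast}\circ p_{H}^{\ast}\circ\phi_{H}^{\ast}\colon\hh_{H}(\pt)\too\bar{\hh}_{H}(G_{K})
$$
(where $i_{H}^{\ast}\circ p_{H}^{\ast}$ is interpreted, as in the statement of Lemma~\ref{factorLem}, via the canonical map to the colimit). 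Applying Lemma~\ref{factorLem} to the $\rho$ from the previous step rewrites $i_{H}^{\ast}\circ p_{H}^{\ast}=(\psi_{E})_{H}^{\ast}\circ\rho_{H}^{\ast}$, and therefore
$$
\varphi_{H}^{\ast}\;=\;(\psi_{E})_{H}^{\ast}\circ\rho_{H}^{\ast}\circ\phi_{H}^{\ast},
$$
which visibly factors through $(\psi_{E})_{H}^{\ast}\colon\hh_{H}(E)\to\bar{\hh}_{H}(G_{K})$, yielding $\Image(\varphi_{H}^{\ast})\subseteq\Image((\psi_{E})_{H}^{\ast})$ as required.

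The only nontrivial step is producing $\rho$; this is precisely why $G$ is embedded in a general linear group $S=\GL(V)$, since only for such an $S$ does Hilbert~90 guarantee the vanishing of $H^{1}(k,S)$. Everything else is a diagrammatic consequence of Lemma~\ref{factorLem} (itself built on Lemmas~\ref{varphiLem} and~\ref{muLem}) together with the functoriality of pull-backs.
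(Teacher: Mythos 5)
Your proof is correct and follows essentially the same route as the paper: factor $\varphi$ through $S$, apply Lemma~\ref{factorLem}, and invoke the existence of a $G$-equivariant map $E\to S=\GL(V)$. The only difference is that where the paper cites \cite[Prop.~6.4]{KaMe06} for the existence of $\rho$, you spell out the underlying Hilbert~90 argument (triviality of the $S$-torsor $E\times^{G}S$ giving a $k$-point, hence a $G$-equivariant $E\to S$), which is a nice self-contained touch but not a genuinely different method.
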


\begin{proof}
By Lemma~\ref{varphiLem} we have 
$$
\Image(\varphi_H^*)=\Image(i^*_H\circ p^*_H\circ \phi_H^*)=\Image(i^*_H\circ p^*_H).
$$
Theorem then follows from Lemma~\ref{factorLem}
and the fact that there exists a finite dimensional
$k$-vector space~$V$ and a $G$-equivariant map $E\too S=\GL (V)$,
see \cite[Prop.~6.4]{KaMe06}. 
\end{proof}

\begin{dfn}\label{geninv}
Let $H\subset G$ be algebraic groups over $k$ and $\hh_{H}(-)$
be an equivariant pretheory with values in the category of commutative rings. 
To each $G$-torsor $E$
we associate a commutative ring
$$
E\mapsto \bar\hh_{H}(G_K)\otimes_{\hh_{H}(E)}\bar\hh_{H}(H_K),
$$
where $\bar\hh_{H}(G_K)$ is the $\hh_H(E)$-module via $(\psi_E)_H^*$ and $\bar\hh_H(H_K)$ is
the $\hh_H(E)$-module via the composite $\hh_H(E)\stackrel{(\psi_E)^*_H}\to \bar\hh_H(G_K)\to \bar\hh_H(H_K)$ with the last map induced by the embedding 
$H\subset G$.

\smallbreak

This ring will be denoted by $\widehat\hh_{H}(E)$ and
will play the central role in the last section of this paper. In particular, it will be shown
that for known examples of equivariant pretheories it is always a quotient of the cohomology ring  $\hh(G)$ of $G$.
\end{dfn}

\begin{cor}\label{mainThm2} Let $H\subset G$ be algebraic groups over $k$ and let $\hh_H(-)$
be an essential $H$-equivariant pretheory. Then
there exists a field extension $l/k$ and a $G$-torsor $E$ over $l$ with $L=l(E)$ such that
$$\Image (\varphi_{H_l}^*)= \Image((\psi_E)_{H_l}^*) \text{ in }\bar\hh_{H_l}(G_L)$$
\end{cor}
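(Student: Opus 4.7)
The plan is to take $E$ to be the generic $G$-torsor obtained from a faithful representation. I fix an embedding $G\hookrightarrow S:=\GL(V)$; by \cite[Prop.~6.4]{KaMe06} the right multiplication of $G$ on $S$ is free and the quotient $\pi\colon S\to S/G$ exists as a smooth quasi-projective $k$-variety. I then set $l:=k(S/G)$ and let
$$E:=S\times_{S/G}\Spec l,$$
the generic fibre of $\pi$, which is a $G_l$-torsor over $l$ with function field $L=l(E)$ (canonically $k(S)$, viewed as a $k$-extension of $l$).

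The crucial geometric observation is that, viewed as a $k$-scheme, $E$ is essentially smooth, being the inverse limit $\varprojlim_U\pi^{-1}(U)$ over the affine open neighbourhoods $U$ of the generic point of $S/G$; each $\pi^{-1}(U)\hookrightarrow S$ is an $H$-equivariant open embedding, so the projection $\rho\colon E\to S$ is a $G$-equivariant $k$-morphism which is pro-open. Applying axiom (L) to every open embedding $\pi^{-1}(U)\hookrightarrow S$ and axiom (C) to the pro-limit $E$, the composite $\hh_H(S)\to\hh_H(\pi^{-1}(U))\to\bar\hh_H(E)\to\hh_H(E)$ is surjective, hence so is the pull-back $\rho_H^{\ast}\colon\hh_H(S)\to\hh_H(E)$. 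Lemma~\ref{factorLem} then gives $(\psi_E)_H^{\ast}\circ\rho_H^{\ast}=i_H^{\ast}\circ p_H^{\ast}$, and combining this with the surjectivity just obtained and Lemma~\ref{varphiLem},
$$\Image\bigl((\psi_E)_H^{\ast}\bigr)\,=\,\Image(i_H^{\ast}\circ p_H^{\ast})\,=\,\Image(\varphi_H^{\ast})$$
inside $\bar\hh_H(G_L)$, where $\varphi\colon G_L\to\Spec k$ is the $k$-structure map.

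To transfer this identity to the statement over $l$, I factor $\varphi$ as $G_L\to\Spec l\to\Spec k$, so that $\Image(\varphi_H^{\ast})\subseteq\Image(\varphi_{H_l}^{\ast})$; together with $(\psi_E)_{H_l}^{\ast}=(\psi_E)_H^{\ast}$ (under the natural interpretation of $\hh_H$ as an essential pretheory over $l$) and Theorem~\ref{mainThm} applied over $l$, one obtains
$$\Image(\varphi_{H_l}^{\ast})\subseteq\Image\bigl((\psi_E)_{H_l}^{\ast}\bigr)=\Image\bigl((\psi_E)_H^{\ast}\bigr)=\Image(\varphi_H^{\ast})\subseteq\Image(\varphi_{H_l}^{\ast}),$$
forcing equality throughout. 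The hard part will be the surjectivity of $\rho_H^{\ast}$: one must couple (L) on each finite open approximation $\pi^{-1}(U)\subseteq S$ with (C) on the pro-limit $E$, and also check that Lemma~\ref{factorLem} applies to the essentially smooth (non-finite-type) $E$ in place of a $G$-variety, and that the base-changed essential pretheory $\hh_{H_l}$ is consistent with $\hh_H$ under the pro-scheme identification.
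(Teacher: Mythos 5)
Your construction of $E$ is exactly the paper's: fix $G\hookrightarrow S=\GL(V)$, form the quotient of $S$ by the right $G$-action, take $l$ to be its function field, and let $E$ be the generic fibre. Your surjectivity argument for $\rho_H^\ast$ via~(L) on each $\pi^{-1}(U)\hookrightarrow S$ and~(C) on the pro-limit is also exactly what the paper does (phrased as ``$E$ is a localization of~$S$''). So the core approach matches.

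The wrinkle is in your middle step, and you flag it yourself. You want to apply Lemma~\ref{factorLem} over $k$ to get $(\psi_E)_H^\ast\circ\rho_H^\ast=i_H^\ast\circ p_H^\ast$, but that lemma is stated for $E$ a $G$-\emph{variety} over the base field, whereas your $E$ is only an essentially smooth $k$-scheme (it is an $l$-variety, not of finite type over $k$). The paper sidesteps this entirely: rather than deriving an equality in $\bar\hh_H(G_L)$ over $k$ and then transferring, it notes that $\rho_H^\ast$ surjective implies $\rho_{H_l}^\ast\colon\hh_{H_l}(S_l)\to\hh_{H_l}(E)$ surjective, and then simply runs the proof of Theorem~\ref{mainThm} (hence Lemmas~\ref{varphiLem} and~\ref{factorLem}) entirely over $l$, where $E$ \emph{is} a variety, obtaining $\Image(\varphi_{H_l}^\ast)=\Image((\psi_E)_{H_l}^\ast\circ\rho_{H_l}^\ast)=\Image((\psi_E)_{H_l}^\ast)$ in one stroke. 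This also makes your chain of four inclusions and the claimed identity $(\psi_E)_{H_l}^\ast=(\psi_E)_H^\ast$ unnecessary. The fix is just to discard the ``work over $k$ then transfer'' detour and carry out the factoring over $l$ from the start.
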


\begin{proof}
We fix
an embedding $G\too S=\GL (V)$ for some finite dimensional
$k$-vector space~$V$. The quotient $S\too G\backslash S$ (for
the right action of~$G$ on~$S$) is a (left) $G$-torsor. Let~$l$
be its function field and consider the cartesian square
$$
\xymatrix{
E \ar[r]^{\rho} \ar[d] & S \ar[d]
\\
\Spec l \ar[r] & G\backslash S\, .
}
$$
Since $S\too G\backslash S$ is a $G$-torsor the map $E\too\Spec l$
is a $G$-torsor, too. The $l$-scheme $E$ is a localization of~$S$
and, therefore, by (C) and localization property (L) the pull-back
$\rho_{H}^{\ast}\colon\hh_{H}(S)\to\bar\hh_{H}(E)\to \hh_H(E)$
is surjective.  This implies that the pull-back $\rho_{H_l}^*\colon\hh_{H_l}(S_l)\to \hh_{H_l}(E)$
is surjective.
It remains to apply the proof of Theorem~\ref{mainThm} over $l$ and to observe
that 
$\Image(\varphi_{H_l}^*)=\Image((\psi_E)^*_{H_l}\circ \rho^*_{H_l})=\Image((\psi_E)^*_{H_l})$.
\end{proof}

\begin{dfn}
A $G$-torsor~$E$ over $k$ which satisfies the equality of Corollary~\ref{mainThm2} 
will be called a {\it generic torsor} with
respect to the pretheory $\hh_{H}(-)$.
Note that in the proof
we provided an example of a $G$-torsor which is generic over some field extension for
all equivariant pretheories.
\end{dfn}

\begin{ex}
Observe that for a generic $G$-torsor $E$ over $k$ we have 
$$
\widehat{\hh}_{H}(E)=\bar{\hh}_{H}(G_K)
   \otimes_{\hh_{H}(\pt)}\bar\hh_{H}(H_K),
$$
where $\bar{\hh}_{H}(G_K)$ is an
$\hh_{H}(\pt)$-module via $\varphi^{*}_{H}$,
and for the trivial $G$-torsor $G$ we have
$$
\widehat{\hh}_{H}(E)=\bar\hh_{H}(G_K)\otimes_{\hh_H(G)} \bar\hh_H(H_K).
$$
\end{ex}


\section{Equivariant oriented cohomology}
\label{ApplExplSect}

In the present section we apply Theorem~\ref{mainThm} to the case
of a $B$-equivariant oriented cohomology, where $B$ is a Borel subgroup of a split
semisimple linear algebraic group.

\smallbreak

Let $G$ be a split semisimple linear algebraic group of rank $n$ over a field $k$ and
let $T$ be a split maximal torus of $G$.
Similarly to \ref{eqspecsec}
consider the action \eqref{actT} of $T$ on the affine space $\A^{n}_{k}$
with weights $\chi_{1},\ldots,\chi_{n}$ together with an action of~$T$
on~$G$ by left multiplication.
Then $T$ embeds into $\A^{n}_{k}=\Spec k[x_{1},\ldots ,x_{n}]$
as the complement of the coordinates hyperplanes $Z_{i}$, $i=1,\ldots ,n$. 
Let $V=\A^{n}_{k}\times^{T} G$ be the associated vector
bundle over~$G/T$ (see \cite[p.22]{Br98}). By definition
$V=L_{G/T}(\chi_{1})\oplus\ldots \oplus L_{G/T}(\chi_{n})$
and $G=T\times^{T} G$  embeds into~$V$ as the complement of the union
of zero-sections
$$
V_{j}\, =\,\bigoplus\limits_{j\neq i}L_{G/T}(\chi_{i})=Z_{j}\times^{T} G\, ,
\quad i=1,\ldots ,n\, .
$$ 
Note that $e_{j}\colon V_{j}\hookrightarrow V$ is a smooth
subvariety for every~$j$.

\medbreak

Let now $\hh(-)$ be an oriented cohomology theory in the
sense of~\cite{AC}, i.e. a~contravariant functor from the category of smooth varieties over $k$ to the category of graded commutative rings satisfying certain axioms. In particular, if~$X$ is a $k$-variety
with an open subvariety $\iota\colon U\hookrightarrow X$ there is
an exacts sequence
$$
\xymatrix{
\hh (Z) \ar[r]^-{j_{\ast}} & \hh (X) \ar[r]^-{\iota^{\ast}} &
               \hh (U) \ar[r] & 0
}
$$
where $j\colon Z=X\setminus U\hookrightarrow X$ is the closed
complement of~$U$, and there is also a first Chern class
which we denote by~$c_{1}^{\hh}$.

\smallbreak

Having such a theory~$\hh (-)$ we get from this localization
sequence (by induction) an exact sequence
$$
\bigoplus_{j=1}^{n} \hh(V_{j}) \stackrel{\oplus_{j}
      (e_{j})_{*}}\too \hh(V) \to \hh(G) \to 0
$$
By the properties of the first Chern class we have
$$(e_{j})_{*}(1_{V_{j}})=c_{1}^\hh(L_{V}(\chi_{j}))$$
which implies that the image of $\oplus_{j}(e_{j})_{*}$ is an
ideal generated by the first Chern classes
$c_{1}^\hh(L_{V}(\chi_{j}))$, $j=1,\ldots ,n$.

\smallbreak

Let $B$ be a Borel subgroup of $G$ containing $T$ and let $G/B$ be the variety of Borel subgroups.
By \cite[Thm.10.6 of Ch.III]{Bo69} the composite of projections $V\to G/T \to G/B$ is a chain of affine bundles.
Therefore, by the homotopy invariance there is an isomorphism 
$\hh(G/B)\stackrel{\simeq}\to \hh(V)$ compatible with the Chern classes and we obtain the following

\begin{prop}\label{Giso}
There is an isomorphism of rings
$$
\hh(G)\simeq \hh(G/B)/\big(c_{1}^\hh(L_{G/B}(\chi_{1})),
          \ldots, c_{1}^\hh(L_{G/B}(\chi_{n}))\big)\, ,
$$
where $\chi_{1},\ldots ,\chi_{n}$ is a basis of the
character group~$T^{*}$.
\end{prop}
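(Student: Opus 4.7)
The plan is to assemble the isomorphism from two ingredients already prepared in the preceding paragraphs: the localization presentation of $\hh(G)$ as a quotient of $\hh(V)$, and the chain of affine bundles $V \to G/T \to G/B$.

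First I would record, as the starting point, the exact sequence
$$
\bigoplus_{j=1}^{n}\hh(V_{j}) \xrightarrow{\;\oplus_{j}(e_{j})_{\ast}\;} \hh(V) \too \hh(G) \too 0
$$
noted just above the proposition, together with the identification $(e_{j})_{\ast}(1_{V_{j}}) = c_{1}^{\hh}(L_{V}(\chi_{j}))$. Since each $V_j$ is itself a vector bundle over $G/T$ with pullback map $\hh(G/T) \xrightarrow{\simeq} \hh(V_j)$, the image of $(e_{j})_{\ast}$ is the ideal generated by $c_{1}^{\hh}(L_{V}(\chi_{j}))$ (by the projection formula applied to the surjection $\hh(V) \twoheadrightarrow \hh(V_j)$ induced by the zero section, using that $1 \in \hh(V)$ is the pullback of $1 \in \hh(V_j)$). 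Thus
$$
\hh(G) \;\simeq\; \hh(V)\big/\bigl(c_{1}^{\hh}(L_{V}(\chi_{1})),\ldots, c_{1}^{\hh}(L_{V}(\chi_{n}))\bigr).
$$

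Next, I would invoke the cited theorem from Borel that the composite $V \to G/T \to G/B$ is a chain of affine (hence vector) bundles, so homotopy invariance of the oriented cohomology theory $\hh(-)$ yields a ring isomorphism $\pi^{\ast}\colon \hh(G/B) \xrightarrow{\;\simeq\;} \hh(V)$. The key compatibility to check is that under $\pi^{\ast}$ the Chern classes $c_{1}^{\hh}(L_{G/B}(\chi_{j}))$ correspond to $c_{1}^{\hh}(L_{V}(\chi_{j}))$. This reduces to the fact that $L_{V}(\chi_{j})$ is the pullback of $L_{G/B}(\chi_{j})$ along $\pi$: indeed $L_{G/B}(\chi_{j}) = \A^{1}_{k}\times^{B} G$ (the character $\chi_{j}$ of $T$ extends to $B$ trivially on the unipotent radical), its pullback to $G/T$ is $\A^{1}_{k} \times^{T} G = L_{G/T}(\chi_{j})$, and its further pullback to $V$ is $L_{V}(\chi_{j})$ by construction. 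Functoriality of $c_{1}^{\hh}$ under pullback then gives $\pi^{\ast}(c_{1}^{\hh}(L_{G/B}(\chi_{j}))) = c_{1}^{\hh}(L_{V}(\chi_{j}))$.

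Combining these two steps, the isomorphism $\pi^{\ast}$ carries the ideal generated by $c_{1}^{\hh}(L_{G/B}(\chi_{j}))$ onto the ideal generated by $c_{1}^{\hh}(L_{V}(\chi_{j}))$, giving the desired ring isomorphism
$$
\hh(G) \;\simeq\; \hh(G/B)\big/\bigl(c_{1}^{\hh}(L_{G/B}(\chi_{1})),\ldots, c_{1}^{\hh}(L_{G/B}(\chi_{n}))\bigr).
$$
The only delicate point I expect is the verification that the image of the map $\oplus_{j}(e_{j})_{\ast}$ equals the full ideal (and not just contains the Chern classes); this uses the projection formula together with the fact that pullback from $G/T$ to each $V_{j}$ is surjective, which is clear since $V_{j} \to G/T$ is itself a vector bundle.
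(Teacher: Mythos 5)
Your proposal is correct and follows essentially the same route as the paper: the paper's proof of Proposition \ref{Giso} is precisely the discussion preceding it (localization gives the exact sequence $\bigoplus_j \hh(V_j) \to \hh(V) \to \hh(G) \to 0$ with $(e_j)_*(1_{V_j}) = c_1^\hh(L_V(\chi_j))$, the image is the ideal generated by these Chern classes, and the chain of affine bundles $V\to G/T\to G/B$ from Borel gives $\hh(G/B)\simeq\hh(V)$ compatibly with Chern classes). The only thing you add is the explicit projection-formula argument for why the image of $(e_j)_*$ is the full principal ideal (which the paper leaves implicit) and the explicit check that $L_V(\chi_j)$ pulls back from $L_{G/B}(\chi_j)$, both of which are correct; one minor slip is that you say ``$1\in\hh(V)$ is the pullback of $1\in\hh(V_j)$'' when you mean that $e_j^*(1)=1$, i.e.\ the unit of $\hh(V_j)$ is the pullback of the unit of $\hh(V)$, and $e_j^*\colon\hh(V)\to\hh(V_j)$ is in fact an isomorphism rather than merely a surjection, though surjectivity is all the argument needs.
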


Let $\hh_B(-)$ be an $B$-equivariant pretheory 
to the category of commutative rings such that
\begin{itemize}
\item[(i)]
$\hh_{B}(E)=\hh(E/B)$ for every $G$-torsor $E$, where $\hh(-)$ is an oriented cohomology
in the sense of \cite{AC}.

\smallbreak

\item[(ii)] $\bar\hh_{B}(B_K)=\hh(\pt)$ and $\bar\hh_{B}(G_K)\simeq \hh(G/B)$. 
\end{itemize}

\smallbreak

Then
 the ring $\hat\hh_B(E)=\bar{\hh}_{B}(G_K)\otimes_{\hh_{B}(E)}\bar{\hh}_{B}(B_K)$ introduced in Definition~\ref{geninv} can be identified
with a quotient of $\bar\hh_B(G_K)\simeq \hh(G/B)$ modulo the ideal generated by non-constant elements from the image of the restriction
$(\psi_E)_B^{\ast}\colon \hh(E/B)\to \hh(G/B)$.

\smallbreak

Consider now the map 
$\varphi_{B}^{*}\colon \hh_{B}(\pt)
   \too\bar{\hh}_{B}(G_K)\simeq \hh(G/B)$.
By Theorem~\ref{mainThm} $\Image(\varphi^*_B)\subseteq \Image((\psi_E)^*_B)$, hence,
$\hat\hh_B(E)$ can be identified with a quotient of the factor ring $\hh(G/B)/I$, where $I$ denotes
the ideal generated by elements from the image of $\varphi^*_B$ which are
in the kernel of the augmentation.

\smallbreak

Then by Proposition~\ref{Giso} and Corollary~\ref{mainThm2} we obtain
the following

\begin{cor}\label{corGiso} 
Assume that the image of $\varphi_{B}^{\ast}$
is generated by the Chern classes $c_1^\hh(L_{G/B}(\chi_i))$
of line bundles associated to the characters $\chi_i\in T^*$  ($i=1\ldots n$). 
Then  $\hat\hh_B(E)$ is a quotient of $\hh(G/B)/I\simeq \hh(G)$.
Moreover, if $\hh_B(-)$ is essential and $E$ is generic, then
$$
 \hat\hh_B(E)\simeq\hh(G).
$$
\end{cor}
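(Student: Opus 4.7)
The plan is to unfold the definition of $\widehat\hh_B(E)$ using assumptions (i) and (ii), and then to match the resulting quotient of $\hh(G/B)$ against the presentation of $\hh(G)$ provided by Proposition~\ref{Giso}.

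First I would use (i) and (ii) to rewrite
\[
\widehat\hh_B(E)\;=\;\bar\hh_B(G_K)\otimes_{\hh_B(E)}\bar\hh_B(B_K)\;\simeq\;\hh(G/B)\otimes_{\hh(E/B)}\hh(\pt).
\]
Here $\hh(\pt)$ is made into an $\hh(E/B)$-algebra via the composite $\hh(E/B)\xrightarrow{(\psi_E)_B^{\ast}}\hh(G/B)\twoheadrightarrow\hh(\pt)$, whose second factor is the augmentation. Hence the right-hand side is $\hh(G/B)/I_E$, where $I_E$ denotes the ideal generated by those elements of $(\psi_E)_B^{\ast}(\hh(E/B))$ that lie in the augmentation kernel.

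Next I would compare $I_E$ with the ideal $I$ generated by the non-constant elements of $\Image(\varphi_B^{\ast})$. By Theorem~\ref{mainThm} we have $\Image(\varphi_B^{\ast})\subseteq\Image((\psi_E)_B^{\ast})$, hence $I\subseteq I_E$. The assumption of the corollary identifies $\Image(\varphi_B^{\ast})$, as a subring of $\hh(G/B)$, with the subring generated by the Chern classes $c_1^{\hh}(L_{G/B}(\chi_i))$; since these classes all lie in the augmentation kernel, $I$ coincides with the ideal appearing in Proposition~\ref{Giso}, and that proposition gives $\hh(G/B)/I\simeq\hh(G)$. Combining this with $I\subseteq I_E$ realizes $\widehat\hh_B(E)=\hh(G/B)/I_E$ as a quotient of $\hh(G/B)/I\simeq\hh(G)$, which is the first assertion.

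For the second assertion I would invoke genericity: when $\hh_B(-)$ is essential and $E$ is generic, the defining equality from Corollary~\ref{mainThm2} gives $\Image(\varphi_B^{\ast})=\Image((\psi_E)_B^{\ast})$, so the two ideals coincide, $I=I_E$, and consequently $\widehat\hh_B(E)\simeq\hh(G)$. The main subtle point I anticipate is the bookkeeping around the augmentation: one must check that the augmentation $\hh(G/B)\to\hh(\pt)$ built into the tensor product description is the same one whose kernel captures the Chern classes $c_1^{\hh}(L_{G/B}(\chi_i))$, so that $I$ truly coincides with the Chern-class ideal of Proposition~\ref{Giso}. This is immediate because each $L_{G/B}(\chi_i)$ restricts to the trivial line bundle on a point, but it is the only step where some care is required.
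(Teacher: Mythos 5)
Your proposal is correct and follows essentially the same route as the paper: the paper's own argument is exactly the sequence you give, namely identifying $\widehat\hh_B(E)$ with $\hh(G/B)/I_E$ via conditions (i) and (ii), using Theorem~\ref{mainThm} to get $I\subseteq I_E$, matching $I$ with the Chern-class ideal of Proposition~\ref{Giso}, and invoking Corollary~\ref{mainThm2} for the generic case. Your remark about the compatibility of augmentations is a reasonable extra check and does not signal a gap.
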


\begin{ex}[Chow groups and the $J$-invariant]
Consider the equivariant Chow groups $\hh_{B}(-)=\CH^{B}(-)$. Let $E$ be a $G$-torsor.
The ring $\hh_{B}(\pt)$ can be identified with the
symmetric algebra $S(T^{*})$ and the map 
$$
\varphi_{B}^{*}\colon S(T^{*})=\hh_{B}(\pt)
   \too\bar{\hh}_{B}(G_K)=\CH(G/B)
$$
coincides with the characteristic map for Chow
groups  studied in~\cite{De74}. So its image is
generated by the first Chern classes $c_{1}(L_{G/B}(\chi_{i}))$
of the respective line bundles.

\smallbreak

The map
$(\psi_E)_B^{*}$
coincides with the restriction map
$$
\res\colon \CH(E/B) \too \CH(G/B),
$$
where $E/B$ is the twisted form of $G/B$ by means of $E$
and the map $$S(T^{*})=\hh_{B}(\pt)\too \hh_{B}(B_K)=\CH(\pt)=\zz$$
is the augmentation map. If $E$ is generic, then we have
$$
\widehat \hh_{B}(E)\simeq \CH(G/B)\otimes_{S(T^{*})}
           \zz\simeq \CH(G).
$$
where the last isomorphism follows by Corollary~\ref{corGiso}.
For an arbitrary $G$-torsor~$E$ the ring
$$
\widehat \hh_{B}(E)=\CH(G/B)\otimes_{\Image(\res)}\zz
$$
is a quotient ring of $\CH(G/B)$ modulo the ideal generated
by non-constant elements from the image of the restriction $\CH(E/B)\to \CH(G/B)$.

\smallbreak

Observe that the characteristic map $\varphi^{*}_{B}$
is not surjective in general. However, its image is a subgroup of finite index in 
$\CH(G/B)$ measured by the torsion index $\tau$ of $G$.
This implies that  for a~$G$-torsor~$E$ we have
$$\widehat\hh_{B}(E)\otimes_{\zz}\mathbb{Q}\simeq\mathbb{Q}.$$

If $p\mid\tau$, then there is
an isomorphism
$$
\widehat{\hh}_{B}(E)\otimes_{\zz}\zz/p\simeq 
\frac{\zz/p\,[x_1,\ldots,x_r]}{(x_{1}^{p^{j_{1}}},
          \ldots,x_{r}^{p^{j_{r}}})},
$$
where $(j_{1},\ldots,j_{r})$ is the {\it $J$-invariant}
of~$G$ twisted by $E$  as defined in \cite{PeSeZa08}.
Observe that $j_{i}\leq k_{i}$, $i=1\ldots r$,
where $k_{i}$ are defined via the
{\em $p$-exceptional degrees} introduced by Kac~\cite{Kc85},
and for a generic torsor~$E$ we have equalities $j_{i}=k_{i}$
for each $i$.
\end{ex}

\begin{ex}[Grothendieck's $K_{0}$ and indexes of the Tits algebras]

Consider the equivariant $K_{0}$-groups $\hh_{B}(-)=\QK_{0}(B,-)$. Let $E$ be a $G$-torsor. 
The ring
$\hh_{B}(\pt)$ can be identified with the integral
group ring $\zz[T^{*}]$ and with the representation ring $\Rep T$ of $T$, i.e.
$$
\hh_{B}(\pt)=\zz[T^{*}]=\Rep T.
$$
The map
$$
\varphi_{B}^{*}\colon \zz[T^{*}]=\hh_{B}(\pt)
      \too\bar\hh_{B}(G_K)\simeq \QK_{0}(G/B)
$$
coincides with the characteristic map $\cc$ for $K_{0}$
studied in \cite{De74} and again its image is generated by the first Chern classes.

\smallbreak

As before the map
$(\psi_E)_B^*$
coincides with the restriction map
$$
\res\colon \QK_0(E/B) \too \QK_0(G/B),
$$
and applying \ref{mainThm}
we obtain the following $K_0$-analogue of the
Karpenko-Merkurjev result:

\begin{cor}
\label{ImageResThm}
Let $E$ be a $G$-torsor over $k$
and let $E/B$ be a twisted form of~$G/B$ by $E$.
Then
\begin{itemize}
\item[(i)]
$\cc(\zz[T^{*}])\,\subseteq\, \res(\QK_{0}(E/B))$;

\smallbreak

\item[(ii)]
there exists a $G$-torsor~$E$ over some field extension of $k$ such that 
$$\cc(\zz[T^{*}])=\res(\QK_0(E/B)).$$
\end{itemize}
\end{cor}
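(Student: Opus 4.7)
The plan is to deduce both claims by direct application of Theorem~\ref{mainThm} and Corollary~\ref{mainThm2} to the essential $B$-equivariant pretheory $\hh_B(-) = \QK_0(B,-)$, once the appropriate dictionary between equivariant $K_0$-groups and ordinary $K_0$-groups of quotients has been set up. All the ingredients are in fact already laid out in the paragraphs preceding the statement; the job is to check that the general framework specializes correctly.

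First I would record the identifications. Since $B = T\ltimes R_{u}(B)$ and the unipotent radical contributes nothing to the representation ring, $\hh_B(\pt) = \Rep(B) = \Rep(T) = \zz[T^{*}]$. For any $G$-torsor $E$ the subgroup $B$ acts freely on $E$, so by faithfully flat descent $\hh_B(E) = \QK_0(B,E) \simeq \QK_0(E/B)$, where $E/B$ is the twisted form of $G/B$. Over $K = k(E)$ the torsor $E_K$ is trivial, hence $(E/B)_K \simeq (G/B)_K$, and the same descent together with the fact that $\QK_0$ of a split flag variety is stable under field extension yields $\bar\hh_B(G_K) \simeq \QK_0(G/B)$. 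Under these identifications the pull-back $\varphi_B^{*}$ is exactly the characteristic map $\cc\colon \zz[T^{*}] \to \QK_0(G/B)$ of Demazure~\cite{De74}, and $(\psi_E)_B^{*}$ is the restriction map $\res\colon \QK_0(E/B)\to \QK_0(G/B)$ coming from the $G$-equivariant morphism $\psi_E\colon G_K\to E$ after passing to $B$-quotients.

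With this dictionary in hand, part~(i) is an immediate translation of Theorem~\ref{mainThm} applied to $H=B$: the inclusion $\Image(\varphi_B^{*}) \subseteq \Image((\psi_E)_B^{*})$ in $\bar\hh_B(G_K)$ becomes $\cc(\zz[T^{*}]) \subseteq \res(\QK_0(E/B))$ in $\QK_0(G/B)$. For part~(ii), we invoke Corollary~\ref{mainThm2}, which (since $\QK_0(B,-)$ has already been shown to be essential) provides a field extension $l/k$ and a $G$-torsor $E$ over $l$ - namely the generic torsor arising from $\GL(V)\to G\backslash\GL(V)$ - such that $\Image(\varphi_{B_l}^{*}) = \Image((\psi_E)_{B_l}^{*})$. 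Translating through the dictionary over $l$ yields the desired equality $\cc(\zz[T^{*}]) = \res(\QK_0(E/B))$.

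The only point requiring more than cosmetic verification is the commutativity of the dictionary with the maps involved, i.e.\ checking that the abstract equivariant pull-backs $\varphi_B^{*}$ and $(\psi_E)_B^{*}$ of the pretheory really coincide with $\cc$ and $\res$ under the descent isomorphisms. This is routine: it follows from the construction of Thomason's equivariant $K_0$ via locally free $G$-modules together with the standard identification of equivariant sheaves on a free $B$-variety with sheaves on the quotient, and it is precisely the pattern already used for Chow groups in the preceding example. Once this bookkeeping is done, no further argument is needed.
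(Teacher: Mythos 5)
Your proposal is correct and follows the same route as the paper: set up the identifications $\hh_B(\pt)=\zz[T^*]$, $\hh_B(E)\simeq\QK_0(E/B)$, $\bar\hh_B(G_K)\simeq\QK_0(G/B)$, recognize $\varphi_B^*$ as the characteristic map $\cc$ and $(\psi_E)_B^*$ as $\res$, then apply Theorem~\ref{mainThm} for part~(i) and Corollary~\ref{mainThm2} (using that $\QK_0(B,-)$ is essential) for part~(ii). You spell out the descent bookkeeping a bit more explicitly than the paper, which simply asserts these identifications, but the argument is the same.
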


According to a result of Panin~\cite{Pa94}
the image of the restriction map is given by the sublattice
$$
\{i_{w,E} \cdot g_{w}\}_{w\in W},
$$
where $W$ is the Weyl group of~$G$,
$\{g_{w}\}_{w\in W}$ is the Steinberg basis
of $\QK_{0}(G/B)$ and $\{i_{w,E}\}$ are 
indexes of the respective Tits algebras.

\smallbreak

Corollary~\ref{ImageResThm} implies that there exists a maximal set
of indexes $\{m_{w}\}_{w\in W}$ such that
\begin{itemize}
\item
$i_{w,E}\leq m_{w}$ for every $w\in W$ and
every torsor $E$;

\smallbreak

\item
there exists~$E$ such that $i_{w,E}=m_{w}$
for every $w\in W$;

\smallbreak

\item
the image of the characteristic map
$\varphi^{*}_B(\zz[T^{*}])$
coincides with the sublattice
$\{m_{w}\cdot g_{w}\}_{w\in W}$, hence,
providing a way to compute $m_{w}$.
\end{itemize}

\smallbreak

The indexes $m_{w}$ are called the {\it maximal Tits indexes}.
They have been extensively studied by
Merkurjev~\cite{Me96} and Merkurjev, Panin and
Wadsworth~\cite{MePaWa96}. They
are closely related to the dimensions of
irreducible representations of~$G$. 
Comparing with the case of Chow groups 
one observes that the maximal Tits indexes
in~$\QK_{0}$ play the same role as
the $p$-exceptional degrees in Chow groups. 

\smallbreak

Since the map $\zz[T^{*}]=\hh_{B}(\pt)
\too\bar\hh_{B}(B_K)=\QK_{0}(\pt)=\zz$
is the augmentation map, for a generic torsor~$E$
we have
$$
\widehat \hh_{B}(E)=\QK_{0}(G/B)\otimes_{\zz[T^{*}]}
       \zz\simeq \QK_{0}(G),
$$
where the last isomorphism follows by Corollary~\ref{corGiso}.
Hence, for an arbitrary $G$-torsor~$E$
$$
\widehat \hh_{B}(E)=\QK_{0}(G/B)\otimes_{\Image (\res)}\zz
$$
is the quotient ring of $\QK_{0}(G/B)$ modulo the ideal generated
by elements from the image of the restriction $\QK_0(E/B)\to \QK_0(G/B)$ which are in the kernel of the augmentation.
\end{ex}

\begin{ex}[Equivariant algebraic cobordism]

Consider the equivariant algebraic cobordism $\hh_{B}(-)=\Cob^{B}(-)$. Let $E$ be a $G$-torsor.
The completion $\hh_B(\pt)^\wedge$ 
of $\hh_{B}(\pt)$ at the augmentation ideal, 
(the kernel of $\hh_B(pt)\to \hh_B(B)$)
can be identified with the formal group ring $\mathbb{L}[[T^{*}]]_U$ 
introduced in \cite[Def.~2.4 and 2.7]{CPZ10}, 
where $\mathbb{L}$ is the Lazard ring
and $U$ denotes the universal formal group law.

\smallbreak

The map 
$$
\varphi_{B}^{*}\colon \mathbb{L}[[T^*]]_U=\hh_{B}(\pt)^{\wedge}
   \too\bar{\hh}_{B}(G_K)=\Cob(G/B)
$$
coincides with the characteristic map of~\cite[Def.~10.2]{CPZ10}
and its image is generated by the first Chern classes. 

\smallbreak

The map
$(\psi_E)_B^{*}$
coincides with the restriction map
$$
\res\colon \Cob(E/B) \too \Cob(G/B),
$$
where $E/B$ is the twisted form of $G/B$ by means of $E$
and the map $\mathbb{L}[[T^{*}]]_{U}=\hh_{B}(\pt)^{\wedge}\too 
\hh_{B}(B_K)=\Cob(\pt)=\mathbb{L}$
is the augmentation map.
By Corollary~\ref{corGiso} for an arbitrary $G$-torsor $E$
we have
$$
\widehat \hh_{B}(E)=\Cob(G/B)\otimes_{\Image(\res)}\mathbb{L}.
$$
is a quotient of the ring $\Cob(G/B)$ modulo the image of the restriction
$\Cob(E/B)\to \Cob(G/B)$ from the kernel of the augmentation.
\end{ex}


\begin{appendix}
\section{A spectral sequence}
\label{SpSeqSect}

\noindent
Let~$X$ be a $k$-scheme, $Z_{1},\ldots ,Z_{m}$ closed
subschemes of~$X$ (with reduced structure), and $\CM_{\ast}$ a cycle
module over~$k$. We construct in this section a spectral sequence which
converges to the cycle homology of~$\CM_{\ast}$ over the open complement
of $\bigcup\limits_{i=1}^{m}Z_{i}$ in~$X$ generalizing the long exact
localization sequence (the case $m=1$). A reader familiar with
Levine~\cite[Sect.\ 1]{Le93} will notice an analogy with Levine's
construction of a similar spectral sequence for Quillen $K$-theory
which should have an explanation in the theory of model categories.

\bigbreak

Let~$m$ be a positive integer and~$\ul{m}$ the set of subsets
of $\{ 1,2,\ldots ,m\}\subset\N $. We set further $\ul{0}=\emptyset$.
We consider~$\ul{m}$ as a category with $\Mor_{\ul{m}}(I,J)=\{\emptyset\}$
(one element set) if $I\subseteq J$ and~$\emptyset$ otherwise.
\begin{dfn}
\label{Defm-cube}
A {\it $m$-cube of complexes} is a functor
$$
K_{\ast}\colon \ul{m}\too\Kb(\Ab)\qquad
      I\,\longmapsto\, K_{I}
$$
from~$\ul{m}$ into the category of bounded (homological)
complexes of abelian groups. If $I\subseteq J$ we denote
$r_{IJ}^{K}$ the induced morphism $K_{I}\too K_{J}$,
and set $r^{K}_{IJ}=0$ if $I\not\subseteq J$.
We observe that $r^{K}_{JL}\cdot r^{K}_{IJ}=r^{K}_{IL}$
if $I\subseteq J\subseteq K$ since $I\mapsto K_{I}$ is
a functor.

\smallbreak

A {\it morphism} of $m$-cubes is a natural transformation.
\end{dfn}

For brevity of notation we define if $l=|J|=|I|+1$ a morphism~$\ep^{K}_{IJ}$
as follows: If $I\not\subset J$ we set $\ep^{K}_{IJ}=0$ and if
$J=\{ i_{1}<i_{2}<\ldots <i_{l}\}$ and $I=J\setminus\{ i_{d}\}$ for
some $1\leq d\leq l$ we set $\ep^{K}_{IJ}=(-1)^{d-1}\cdot r_{IJ}^{K}$.
The signs are chosen, such that the matrix product
\begin{equation}
\label{matrix0Eq}
\big(\ep_{JL}\big)_{|J|=p+1,|L|=p+2}\;\cdot\;
      \big(\ep_{IJ}\big)_{|I|=p,|J|=p+1}
\end{equation}
is zero.

\medbreak

With a $m$-cube of complexes~$K_{\ast}$ we can associate two $(m-1)$-cubes
of complexes (as long as $m\geq 2$). First we have the $(m-1)$-cubes
of complexes $\tilde{K}_{\ast}$ which is the restriction of~$K_{\ast}$
to $\ul{m-1}$, and second we have $K'_{\ast}$. The latter is defined as
the composition of $\ul{m-1}\too\ul{m}$, $J\longmapsto J\cup\{ m\}$,
with $K_{\ast}$, \ie
$$
K'_{\ast}\colon \ul{m-1}\too\Db(\Ab)\qquad
  J\,\longmapsto\, K_{J\cup\{ m\}}\, .
$$

\begin{ex}\label{CKconeComplExpl1}
Let~$X$ be a $k$-scheme with closed subschemes
$Z_{1},\ldots ,Z_{m}$, and $\CM_{\ast}$ a cycle module.
We set $Z_{I}:=\bigcap\limits_{j\not\in I}Z_{j}$ for all~$I$
in~$\ul{m}$. We have than an $m$-cube of complexes
$$
K_{\ast}\colon \CK_{\smb}(X,Z_{1},\ldots ,Z_{m},\CM_{n},\ast)\colon 
I\,\longmapsto\,\CK_{\smb}(Z_{I},\CM_{n})\, ,
$$
where for $I\subseteq J$ the morphism $r_{IJ}^{\CK}$
is the push-forward along the inclusion of closed subschemes
$\bigcap\limits_{j\not\in I}Z_{j} \hookrightarrow
\bigcap\limits_{j\not\in J}Z_{j}$. Then we have
$$
K'_{\ast}\, =\,\CK_{\smb}(X,Z_{1},\ldots ,Z_{m-1},\CM_{n},\ast)
$$
and
$$
\tilde{K}_{\ast}\, =\,\CK_{\smb}(Z_{m},Z_{1}\cap Z_{m}\,
         \ldots ,Z_{m-1}\cap Z_{m},\CM_{n},\ast)\, .
$$
Note also that
$K_{\emptyset}=\CK_{\smb}(\bigcap\limits_{i=1}^{m}Z_{i},\CM_{n})$
and $K_{\{ 1,\ldots ,m\}}=\CK_{\smb}(X,\CM_{n})$.
\end{ex}

\bigbreak

\noindent
\begin{ntt}[{\bf The functors~$\cfb_{\geq i}$}]
\label{Cfb-FunctorSubSect}
Recall first the cone of a morphism of complexes
$f\colon K_{\smb}\too L_{\smb}$. This is the complex
$\cone f$ which is given in degree~$i+1$ and~$i$ by:
$$
\xymatrix{
\ldots \ar[r] & K_{i}\oplus L_{i+1}
  \ar[rrr]^-{\left(\begin{array}{c@{\,\,}c} d^{K}_{i} & f_{i} \\[1mm] 
                         0 & -d^{L}_{i+1}\end{array}\right)}
      & & & K_{i-1}\oplus L_{i} \ar[r] & \ldots\, .
}
$$
We have then a (so called) exact triangle
$K_{\smb}\xrightarrow{f}L_{\smb}\too\cone f\too K_{\smb}[1]$
(with obvious morphisms on the right).

\smallbreak

We define inductively a functor~$\cfb_{\geq i}$ form the
category of $m$-cubes of complexes to $\Kb(\Ab)$ for all $i\in\zz$.
Let $K_{\ast}$ be such a $m$-cube. Then we set $\cfb_{\geq i}K_{\ast}=0$
if $i\geq m+1$ and
$$
\cfb_{\geq m}K_{\ast}\, :=\; K_{\{ 1,\ldots ,m\}}\, .
$$
We have then the morphism of complexes
$$
\Theta^{K}_{m}\, :=\;\sum\limits_{i=1}^{m}\ep_{I\{ 1,\ldots ,m\}}\colon 
  \bigoplus\limits_{|I|=m-1}K_{I}\too
      K_{\{ 1,\ldots ,m\}}\; =\,\cfb_{\geq m} K_{\ast} 
$$
and define $\cfb_{\geq m-1}K_{\ast}$ to be the cone of this morphism.

\smallbreak

The composition
$$
\bigoplus\limits_{|I|=m-2}K_{I}\,
\xrightarrow{\;\big(\ep_{IJ}\big)_{I,J}\;}
\bigoplus\limits_{|J|=m-1}K_{J}\,
\xrightarrow{\Theta^{K}_{m}\;}\, K_{\{ 1,\ldots ,m\}}
$$
is the zero morphism, see~(\ref{matrix0Eq}),
and therefore induces a morphism of complexes
$$
\Theta_{m-1}^{K}\colon \bigoplus\limits_{|I|=m-2}K_{I}[1]
\too\cfb_{\geq m-1}K_{\ast}
    \, =\,\cone\Theta^{K}_{m}\, .
$$
We set then $\cfb_{\geq m-2}\, :=\,\cone\Theta^{K}_{m-1}$. Let now $p\leq m-3$.
Then by (descending) induction we have a morphism of complexes
$$
\Theta_{p+2}^{K}\colon \bigoplus\limits_{|L|=p+1}K_{L}[m-p-2]\too
   \cfb_{\geq p+2}K_{\ast}
$$
such that we have
$\Theta_{p+2}^{K}\,\cdot\,\big(\ep_{JL}[m-p-2]\big)_{|J|=p,|L|=p+1}\, =0$.
Therefore there exists
$$
\Theta_{p+1}^{K}\colon \bigoplus\limits_{|J|=p}K_{J}[m-p-1]
   \too\cfb_{\geq p+1}K_{\ast}\, =\,\cone\Theta_{p+2}^{K}\, ,
$$
such that the following diagram commutes:
$$
\xymatrix{
& {\bigoplus\limits_{|J|=p}K_{J}[m-p-2]}
         \ar[d]^-{(\ep_{JL}[m-p-2])_{J,L}}
            \ar[ld]_-{\Theta_{p+1}^{K}[-1]} &
\\
\cfb_{\geq p+1}[-1] \ar[r] & {\bigoplus\limits_{|L|=p+1}K_{L}[m-p-2]}
     \ar[r]^-{\Theta_{p+2}^{K}} & \cfb_{\geq p+2}K_{\ast} \rlap{\, .}
}
$$
More precisely, the morphism of complexes~$\Theta_{p+1}^{K}[p+1-m]$
is in degree~$t$ given by
$$
\xymatrix{
{\bigoplus\limits_{|J|=p}K_{J\, t}}
   \ar[rr]^-{\left(\begin{array}{c}\big((\ep_{JL})_{JL}\big)_{t}\\[1mm] 0\end{array}\right)}
      & & {\bigoplus\limits_{|L|=p+1}K_{L\, t}\,\oplus\,\cfb_{\geq p+2}[p+1-m]_{t}}\, =\,
                   {\cfb_{\geq p+1}[p+1-m]_{t}}\, .
}
$$
Therefore we have by~(\ref{matrix0Eq}) that
$\Theta_{p+1}^{K}\cdot (\ep_{IJ}[m-p-1])_{I,J}=0$
which finishes the induction step and the
definition of $\cfb_{\geq p}K_{\ast}$
for $p\geq 0$.

\smallbreak

If $p\leq 0$ we set
$\cfb_{\geq p}K_{\ast}:=\cfb_{\geq 0}K_{\ast}$.
\end{ntt}

\begin{ntt}[{\bf An exact triangle}]
\label{Cofiber-DiagSect}
By construction we have then for all $0\leq p\leq m$
a commutative diagram
{\footnotesize
$$
\xymatrix{
{\bigoplus\limits_{\begin{array}{c} |I|=p-2\\ m\not\in I\end{array}}}K_{I\cup\{ m\}}[m-p]
    \ar[r] \ar[dd]^-{\Theta^{K'}_{p-1}} &
         \;\;\; {\bigoplus\limits_{|J|=p-1}}K_{J}[m-p] \ar[r] \ar[dd]^-{\Theta^{K}_{p}} &
         {\bigoplus\limits_{\begin{array}{c} |L|=p-1\\ m\not\in I\end{array}}}K_{L}[m-p]
              \ar[dd]^-{\Theta^{\tilde{K}}_{p}}
\\
\\
\cfb_{\geq p-1}K'_{\ast} \ar[r] & \cfb_{\geq p}K_{\ast} \ar[r] &
        (\cfb_{\geq p}\tilde{K}_{\ast})[1] \rlap{\, ,}
}
$$
}
whose lower row is an exact triangle and whose upper row is a short split
exact sequence of complexes (with obvious morphisms) for all $p\geq -1$.

\smallbreak

Shifting the diagram of the lemma for $p=m-1$ to the left we get
a commutative diagram in the bounded derived category of complexes
of abelian groups:
{\footnotesize
$$
\xymatrix{
{\bigoplus\limits_{\begin{array}{c} |I|=m-2\\ m\not\in I\end{array}}}K_{I} \ar[r]^-{0}
    \ar[dd]^-{\Theta^{\tilde{K}}_{m-1}} &
\;\;\;{\bigoplus\limits_{|J|=m-2}}K_{J}[1] 
            \ar[r] \ar[dd]^-{\Theta^{K}_{m-1}} &
          {\bigoplus\limits_{\begin{array}{c} |L|=m-3\\ m\not\in I\end{array}}}K_{L\cup\{ m\}}[1]
               \ar[dd]^-{\Theta^{K'}_{m-2}}
\\
\\
K_{\{1,\ldots ,m-1\}} \ar[r]^-{\Theta} & \cfb_{\geq m-2}K'_{\ast} \ar[r] &
     \cfb_{\geq m-1}K_{\ast} \rlap{\, ,}
}
$$
}
where the arrow
{\footnotesize
$$
\Theta\colon  K_{\{ 1,\ldots ,m-1\}}\, =\,\cfb_{\geq m-1}\tilde{K}_{\ast}\too
         \qquad\qquad\qquad\qquad\qquad\qquad\qquad\qquad\qquad\qquad\qquad
$$
$$
\qquad\qquad\qquad\cfb_{\geq m-2}K'_{\ast}\, =\,\cone
  \Big(\bigoplus\limits_{\begin{array}{c} |J|=m-2\\ m\not\in J \end{array}}
   K_{J}\,\xrightarrow{\;\Theta^{K'}_{m-1}\;}K_{\{ 1,\ldots ,m\}}\Big)
$$
}
is induced by $\ep_{\{ 1,\ldots m-1\}\{ 1,\ldots m\}}\colon K_{\{ 1,\ldots m-1\}}\too
K_{\{ 1,\ldots m\}}$.
\end{ntt}

\begin{dfn}
\label{Defcofiber}
The {\it cofiber} of the
$m$-cube of complexes~$K_{\ast}$ is the complex
$$
\cfb K_{\ast}\, :=\;\cfb_{\geq 0}K_{\ast}\, .
$$
The assignment $K_{\ast}\mapsto\cfb K_{\ast}$
is a covariant functor from the category of
$m$-cubes of complexes to $\Kb (\Ab)$.
\end{dfn}

\bigbreak

\begin{ntt}[{\bf The spectral sequence}]
\label{SpSeqSubSect}
By the very definition of the complexes $\cfb_{\geq p}K_{\ast}$ we
have exact triangles
$$
\cfb_{\geq p+1}K_{\ast}\too\cfb_{\geq p}K_{\ast}\too
    \bigoplus\limits_{|I|=p}K_{I}[m-p]
$$
for any $m$-cube of complexes~$K_{\ast}$ and
all $p\geq 0$. The associated long exact homology
sequences constitute an exact couple and so we get
a convergent spectral sequence of cohomological type
$$
E^{p,q}_{1}(K_{\ast})\, :=\;
\HM_{-p-q}\big(\bigoplus\limits_{|I|=p}K_{I}[m-p]\big)\;\,
   \Longrightarrow\;\HM_{-p-q}(\cfb K_{\ast})\, .
$$
(Note that the complexes $K_{I}$ and so
also $\cfb_{\geq p}K_{\ast}$ are all
bounded.) By construction the $IJ$-component
$\HM_{-p-q}(K_{I}[m-p])\too\HM_{-p-q}(K_{J}[m-p])$ of
the differential $d_{1}^{p,q}\colon E_{1}^{p,q}(K_{\ast})
\too E_{1}^{p+1,q}(K_{\ast})$ is equal
$\HM_{-p-q}(\ep_{IJ}[m-p])$.
\end{ntt}

\begin{ex}
\label{CKconeComplExpl2}
Let $X,Z_{1},\ldots ,Z_{m}$, $K_{\ast}$,
and $\CM_{\ast}$ be as in example~\ref{CKconeComplExpl1}.
We set $W_{l}:=\bigcup\limits_{j=1}^{l}Z_{j}$ for $1\leq l\leq m$
and $\CK_{\smb}(Y):=\CK_{\smb}(Y,\CM_{n})$ for any finite
type $k$-scheme~$Y$.

The pull-back along the open immersion
$X\setminus W_{m}\hookrightarrow X$
induces a morphism of complexes
$\cfb_{\geq m}K_{\ast}=\CK_{\smb}(X)\too
\CK_{\smb}(X\setminus W_{m})$. The composition
of the morphism with $\Theta^{K}_{m}$ is zero given
a morphism of complexes $\cfb_{\geq m-1}K_{\ast}
\too\CK_{\smb}(X\setminus W_{m})$. Composing
this morphism with $\Theta^{K}_{m-1}$
is again zero and hence induce a
morphism from $\cfb_{\geq m-2}K_{\ast}$
to $\CK_{\smb}(X\setminus W_{m})$. Proceeding
further we finally get a morphism of complexes
$$
\gamma\colon \cfb K_{\ast}\too\CK_{\smb}(X\setminus W_{m})\, .
$$
Similarly we have morphisms of complexes $\tilde{\gamma}:
\cfb\tilde{K}_{\ast}\too\CK_{\smb}(Z_{m}\setminus W_{m-1})$ and
$\gamma'\colon \cfb K'_{\ast}\too\CK_{\smb}(X\setminus W_{m-1})$.
We claim that this is a quasi-isomorphism. This
is obvious for $m=1$. Let $m\geq 2$. Then by~\ref{Cofiber-DiagSect}
we have a commutative diagram whose rows are exact triangles
$$
\xymatrix{
\cfb \tilde{K}_{\ast} \ar[r] \ar[d]^-{\tilde{\gamma}} & \cfb K'_{\ast}
   \ar[r] \ar[d]^-{\gamma'} & \cfb K_{\ast} \ar[d]_-{\gamma}
\\
\CK_{\smb}(Z_{m}\setminus W_{m-1}) \ar[r]^-{\iota_{\ast}} &
   \CK_{\smb}(X\setminus W_{m-1}) \ar[r]^-{j^{\ast}}&
          \CK_{\smb}(X\setminus W_{m}) \rlap{\, ,} 
}
$$
where $\iota\colon  Z_{m}\setminus W_{m-1}\hookrightarrow
X\setminus W_{m-1}$ and $j\colon X\setminus W_{m-1}
\hookrightarrow X\setminus W_{m}$ are to
the respective subschemes corresponding open
respectively closed immersions.
The claim follows from this diagram by induction.

\smallbreak

Hence we have a convergent spectral sequence
$$
E_{1}^{p,q}(X,Z_{1},\ldots ,Z_{m},\CM_{n})\, :=\;
     \bigoplus\limits_{|I|=p}\HM_{-q-m}(Z_{I},\CM_{n})
          \;\;\;\Longrightarrow\;
            \HM_{-p-q}(U,\CM_{n})\, ,
$$
where $U=X\setminus\bigcup\limits_{i=1}^{m}Z_{i}$.
The $IJ$-component of the differential is equal~$0$
if $I\not\subset J$ and equal $(-1)^{d-1}$
times the push-forward along the closed immersion
$Z_{I}\hookrightarrow Z_{J}$ if $J=\{ i_{1}<\ldots <i_{l}\}$
and $I=J\setminus\{ i_{d}\}$.
\end{ex}

\begin{rem}
This spectral sequence applies also to Voevodsky's~\cite{CTMH}
motivic cohomology of a homotopy invariant Nisnevich sheaf
with transfers~$\sheaf{F}$. By D\'eglise's Th\`ese~\cite{Deg02}
one can associate to such a sheaf a cycle module $\hat{\sheaf{F}}_{\ast}$,
such that there is a natural isomorphism
$\HM^{i}_{\Nis}(X,\sheaf{F})\simeq\HM^{i}(X,\hat{\sheaf{F}}_{0})$
for all $i\in\N$. (Vice versa, if~$\CM_{\ast}$ is a cycle module
then $X\mapsto\HM^{i}(X,\CM_{n})$ is a homotopy invariant Nisnevich
sheaf with transfers.)
\end{rem}

\begin{ex}
\label{SingHomExpl}
Let $X$ be a topological space with closed subspaces $Z_{1},\ldots ,Z_{m}$.
As in the book~\cite{LAT} of Dold we denote by~$SX$ and~$S(X,A)$ the
singular and relative singular complex of~$X$ and the pair $(X,A)$
with $A\subseteq X$ a closed subset, respectively.
Let $\HM_{\ast}(X)$ and $\HM_{\ast}(X,A)$ be the homology
groups of these complexes, \ie the {\it (relative) singular homology} of the
space~$X$ and the pair~$(X,A)$, respectively.

We set (as above) $Z_{I}:=\bigcap\limits_{j\not\in I}Z_{j}$ for subsets
$I\subseteq\{ 1,\ldots ,m\}$, and denote by $\iota_{IJ}$ the embedding
$Z_{I}\hookrightarrow Z_{J}$ if $I\subseteq J$.

\smallbreak

The map $I\mapsto SZ_{I}$ is then a $m$-cube
of complexes and we get by the same reasoning as in
Example~\ref{CKconeComplExpl2} a convergent spectral sequence
of cohomological type
$$
E_{1}^{p,q}(X,Z_{1},\ldots ,Z_{m})\, :=\;
  \bigoplus\limits_{|I|=p}\HM_{-q-m}(Z_{I})\;\;\Longrightarrow\;
     \HM_{-p-q}(X,\bigcup\limits_{i=1}^{m}Z_{i})\, ,
$$
where the $IJ$-component of the differential $d_{1}^{p,q}\colon 
E_{1}^{p,q}\too E_{1}^{p+1,q}$ is zero if $I\not\subset J$ and
equal $(-1)^{r-1}\cdot\HM_{-q-m}(\iota_{IJ})$ if
$J=\{ i_{1},\ldots ,i_{p},i_{p+1}\}$ and $I=J\setminus\{ i_{r}\}$
for some $1\leq r\leq p+1$.
\end{ex}

\end{appendix}


\smallbreak

\paragraph{\bf Acknowledgments.}

We would like to thank Roland L\"otscher for
several helpful comments on quotients of group scheme actions.
The first author has been supported by the Deutsche 
Forschungsgemeinschaft, GI 706/1-2 and GI 706/2-1.
The second author has been supported by the NSERC Discovery 385795-2010,
Accelerator Supplement 396100-2010 and the Early Researcher Award grants.

\bibliographystyle{plain}

\end{document}